\documentclass[11pt,twoside,a4paper]{article}

\usepackage{algorithm}
\usepackage{algpseudocode}
\usepackage{graphicx,url}
\usepackage{setspace}
\usepackage{enumerate,lineno,setspace,float}
\usepackage[small,compact]{titlesec}
\usepackage{amsmath,amsfonts,amssymb,amsthm}
\usepackage{geometry}
\usepackage{fancyhdr}
\usepackage{amssymb,amsmath}
\usepackage{latexsym}
\usepackage{graphicx, float}
\usepackage{url}


\newtheorem{Theorem}{Theorem}[section]

\newtheorem{Lemma}[Theorem]{Lemma}
\newtheorem{Example}[Theorem]{Example}
\newtheorem{Observation}[Theorem]{Observation}



\marginparwidth 0pt \oddsidemargin 0pt \evensidemargin 0pt
\topmargin -1.5cm \textheight 25.0 truecm \textwidth 16.0 truecm

\usepackage{color}

\definecolor{Blue}{rgb}{0,0,1}
\definecolor{Red}{rgb}{1,0,0}


\long\def\delete#1{}    

\input amssym.def
\input amssym.tex


\newcommand{\be}{\begin{equation}}
\newcommand{\ee}{\end{equation}}
\newcommand{\bea}{\begin{eqnarray}}
\newcommand{\eea}{\end{eqnarray}}
\newcommand{\bean}{\begin{eqnarray*}}
\newcommand{\eean}{\end{eqnarray*}}

\def\la{\langle}

\def\diag{{\rm diag}}

\def\b{\beta}

\def\l{\lambda}

\def\p{\phi}
\def\r{\rho}
\def\I{\mbox{\rm \textbf{I}}}
\def\R{\mbox{\textbf{R}}}

\def\la{\leftarrow}

\def\({\left(}
\def\){\right)}
\def\[{\left[}
\def\]{\right]}

\begin{document}

\title{A complete characterization of spectra of the Randić matrix of level-wise regular trees}
\author{\textbf{Punit Vadher} \\ Gujarat Technogical University, Ahmedabad - 382 424, Gujarat (INDIA) \\ E-mail : punitv99@gmail.com \\ \\
\textbf{Devsi Bantva}\footnote{Corresponding author.} \\ Lukhdhirji Engineering College, Morvi - 363 642, Gujarat (INDIA) \\ E-mail : devsi.bantva@gmail.com}

\pagestyle{myheadings}
\markboth{\centerline{Punit Vadher and Devsi Bantva}}{\centerline{A complete characterization of spectra of the Randić matrix of level-wise regular trees}}

\date{}
\openup 0.8\jot
\maketitle

\begin{abstract}
Let $G$ be a simple finite connected graph with vertex set $V(G) = \{v_1,v_2,\ldots,v_n\}$. Denote the degree of vertex $v_i$ by $d_i$ for all $1 \leq i \leq n$. The Randić matrix of $G$, denoted by $\R(G) = [r_{i,j}]$, is the $n \times n$ matrix whose $(i,j)$-entry $r_{i,j}$ is $r_{i,j} = 1/\sqrt{d_id_j}$ if $v_i$ and $v_j$ are adjacent in $G$ and 0 otherwise. A tree is a connected acyclic graph. A level-wise regular tree is a tree rooted at one vertex $r$ or two (adjacent) vertices $r$ and $r'$ in which all vertices with the minimum distance $i$ from $r$ or $r'$ have the same degree $m_i$ for $0 \leq i \leq h$, where $h$ is the height of $T$. In this paper, we give a complete characterization of the eigenvalues with their multiplicity of the Randić matrix of level-wise regular trees. We prove that the eigenvalues of the Randić matrix of a level-wise regular tree are the eigenvalues of the particular tridiagonal matrices, which are formed using the degree sequence $(m_0,m_1,\ldots,m_{h-1})$ of level-wise regular trees. \\

\emph{Keywords:} Graph spectrum, Randić matrix, Level-wise regular tree. \\

\emph{AMS Subject Classification (2020): 15A18, 05C50, 05C05.}
\end{abstract}

\section{Introduction}
The graphs considered in this paper are finite connected graphs  without loops or multiple edges. Denote $V(G) = \{v_1, v_2,\dots, v_ n\}$ the vertex set of graph $G$ and $d_i$ the degree of vertex $v_i$ for $i = 1,2,\ldots,n$ (the degree of a vertex $v \in V(G)$ is the number of adjacent vertices to $v$). In 1975, Randić \cite{Randi'c 1975} invented a molecular structure descriptor defined as 
$$
\R\I(G) =  \sum_{v_i \sim v_j}^{} \frac{1}{\sqrt {d_id_j}}, 
$$
where the summation runs over all pairs of adjacent vertices of the underlying (molecular) graph. This graph invariant is nowadays known as Randić index named after Randić; for details refer \cite{Gutman2008,Li2006,Randi'c 2008,Rodriguez2005A,Rodriguez2005B}. The readers are advised to refer \cite{Li2008} for a detail survey on the Randić index. Inspired by the Randić index, a symmetric square matrix $\R = \R(G)$ of order $n$ is associated to the graph $G$ known as Randić matrix of $G$, whose $(i,j)$-entry $r_{i,j}$ is defined as
$$
r_{i,j} = \left\{
\begin{array}{ll}
\frac{1}{\sqrt {d_id_j}}, & \mbox{ if $v_i$ and $v_j$ are adjacent vertices in $G$} , \\ [0.2cm]
0 , & \mbox{ otherwise}.
\end{array}
\right.
$$
The Randić matrix $\R(G)$ without this name was first studied in the book \cite{Cvetkovic1980} by Cvetković, Doob and Sachs (refer \cite{Gutman2014}). The matrix $\R(G)$ was also used to study Randić index in 2005 by Rodríguez who called it the weighted adjacency matrix in \cite{Rodriguez2005A} and the degree adjacency matrix in \cite{Rodriguez2005B}. The $\R$-characteristic polynomial of graph $G$ is the $\R$-characteristic polynomial of the Randić matrix $\R$ of graph $G$ which is $\R(\l) = \det(\l\I_n-\R)$, where $\I_n$ is the identity matrix of size $n$. The Randić matrix is a real symmetric matrix and hence the eigenvalues of $\R$ are real numbers. Denote the eigenvalues of Randić matrix $\R$ by $\r_1 \geq \r_2 \geq \ldots \geq \r_n$ with multiplicity $m_1(\r_1),m_2(\r_2),\ldots,m_n(\r_n)$. The $\R$-spectrum of graph $G$, denoted by $S_{R}(G)$, is 
$$
S_{R}(G) = \bigg(\begin{matrix}
\r_{1} & \r_{2} & \r_{3} & ... & \r_{n} \\
m_{1}(\r_{1}) & m_{2}(\r_{2}) & m_{3}(\r_{3}) & ... & m_{n}(\r_{n})
\end{matrix}\bigg).
$$
The relation between the eigenvalues of Randić matrix of graph $G$ and  Randić index of graph $G$ is as follows (see \cite{Bozkurt2010}): 
$$
\sum_{i=1}^n\r_i^2 = 2 \cdot \R\I(G).
$$
In \cite{Gutman2014}, Gutman et al. gave the following important observations about $\R(G)$ (and it is worth to recall here in connection with our results) while studying the Randić energy of graph $G$:
\begin{itemize}
\item If $G$ has $k$ components, namely $G_1,G_2,\ldots,G_k$ then $S_R(G) = S_R(G_1) \cup S_R(G_2) \cup \ldots \cup S_R(G_k)$.
\item Let $\overline{K}_n$ be the complement of complete graph $K_n$ then $S_R(\overline{K}_n) = \bigg(\begin{matrix}
0 \\
n
\end{matrix}\bigg).$
\item Let $G$ be a graph on $n \geq 1$ vertices and let $\r_1$ be the greatest eigenvalue of its Randić matrix. Then $\r_1=0$ holds if and only if $G \cong \overline{K}_n$. If $G$ has at least one edge, then $\r_1 = 1$ (the last statement is also proved in \cite[Theorem 2.1]{Liu2012}).
\end{itemize}

Moreover, Furtula and Gutman noted in \cite{Furtula2013} that the sum of eigenvalues of Randić matrix of graph $G$ is equal to zero and the relations $$\r_i = \r_{n-i+1},\;\; i=1,2,\ldots,n$$ holds  if and only if the graph $G$ is bipartite.  

The spectra of the Randić matrix is studied by many authors for different purposes, like Randić index, Randić energy, structural information of graph etc. (see \cite{Alikhani2015,Dehmer2015,Gao2021}). In \cite{Alikhani2015}, Alikhani and Ghanbari gave the $\R$-characteristic polynomial (using it one can easily find $\R$-spectrum) for path $P_n$, star $K_{1,n-1}$, cycle $C_n$, complete graph $K_n$, complete bipartite graph $K_{m,n}$ and friendship graphs $F_n$ and $D_4^n$ (refer \cite{Alikhani2015} for definitions of friendship graphs $F_n$ and $D_4^n$). In \cite{Yin2021}, Yin studied the multiple Randić eigenvalues of trees. Andrade et al. studied the Randić spectra of caterpillar graphs (a tree is called a caterpillar if the removal of all its degree-one vertices results in a path) in \cite{Andrade2017}. Rather et al. studied the Randić spectrum of zero divisor graphs of commutative ring $\mathbb{Z}_n$ in \cite{Rather2023}. The adjacency spectra and Laplacian spectra  of level-wise regular trees are studied in \cite{Rojo2002} and \cite{Rojo2005}, respectively. Fernandes et al. discussed the spectra of some graphs like weighted trees in \cite{Fernandes2008}.  

In this paper, we completely characterize all the eigenvalues (with their multiplicity) of the Randić matrix of the level-wise regular trees, a large class of trees. In fact, we prove that the eigenvalues of the Randić matrix of level-wise regular trees are the eigenvalues of the leading principal submatrices of three symmetric tridiagonal matrices of order equal to the height of level-wise regular trees plus one. This paper is organised as follows: We define the necessary terms, notations and also recall some well-known results of spectral theory in section \ref{Prel}. We represent the Randić matrix of level-wise regular trees as a tridiagonal matrix of blocks, which we used to prove our main result. The section \ref{Main} consists of three Lemmas (which are useful to prove our main results) and two main Theorems. We illustrated the whole procedure of main results with the help of two examples.     

\section{Preliminaries}\label{Prel}

In this section, we define necessary terms and notations to prove our main results. We also recall some standard results that are necessary for the present work. 

A tree is a connected acyclic graph. A star on $n$ vertices, denoted by $K_{1,n-1}$, is a tree consisting of $n-1$ leaves and another vertex joined to all leaves by edges. A level-wise regular tree is a tree rooted at one vertex $r$ or two (adjacent) vertices $r$ and $r'$ in which all vertices with the minimum distance, $i$ from $r$ or $r'$ have the same degree $m_i$ for $0 \leq i \leq h$, where $h$ is the height of $T$. Denote the level-wise regular tree by $T^1 = T^1_{m_0,m_1,\ldots,m_{h-1}}$ (with one root) and $T^2 = T^2_{m_0,m_1,\ldots,m_{h-1}}$ (with two root). Observe that the diameter of $T^1_{m_0,m_1,\ldots,m_{h-1}}$ is $2h$ and $T^2_{m_0,m_1,\ldots,m_{h-1}}=2h+1$. The order of level-wise regular tree $T^z=T^z_{m_0,m_1,\ldots,m_{h-1}}\;(z=1,2)$ is  
\begin{equation}\label{eq:order}
|V(T^z)| = \left\{
\begin{array}{ll}
1+m_0+m_0\sum_{t=1}^{h-1}\(\prod_{k=1}^{t}(m_k-1)\), & \mbox{if $z=1$}, \\ [0.2cm]
2+2\sum_{t=0}^{h-1}\(\prod_{k=0}^{t}(m_k-1)\) , & \mbox{if $z=2$}.
\end{array}
\right.
\end{equation}
In this work by a tree $T$ we mean a level-wise regular tree $T^z_{m_0,m_1,\ldots,m_{h-1}}$, where $z=1,2$. Moreover, we assume $m_0 \geq 2$ for $T^1 = T^1_{m_0,m_1,\ldots,m_{h-1}}$ and $m_0 \geq3$ for $T^2 = T^2_{m_0,m_1,\ldots,m_{h-1}}$. Denote $C(T^1) = \{r\}$ and $C(T^2) = \{r,r'\}$. In both cases, we put the $C(T^z)$, where $z=1,2$ at level $0$ then $T^z$ has $h$ levels. Thus the vertices in the level $h$ have degree $m_h = 1$. Define $h+1$ layers $L_i, 0 \leq i \leq h$ of level-wise regular tree $T^z = T^z_{m_0,m_1,\ldots,m_{h-1}} (z=1,2)$ as 
$$L_{i} = \{v \in V(T) : \min\{d(v,u) : u \in C(T^z)\} = i\}.$$ 
Observe that $|L_0|=1, |L_1| = m_0$ for $T^1$ and $|L_0| = 2, |L_1| = 2(m_0-1)$ for $T^2$ and for $2 \leq i \leq h$, 
\begin{equation*}
|L_i| = \left\{
\begin{array}{ll}
m_0\prod_{k=1}^{i-1}(m_k-1), & \mbox{if $|L_0|=1$}, \\ [0.2cm]
2\prod_{k=0}^{i-1}(m_k-1), & \mbox{if $|L_0|=2$}.
\end{array}
\right.
\end{equation*}
Moreover, in a level-wise regular tree $T^z_{m_0,m_1,\ldots,m_{h-1}}\;(z=1,2)$, it is clear that for any $1 \leq i \leq h-1$, $|L_{i+1}| = (m_i-1)|L_i|$ and, $|L_i| = |L_{i+1}|$ if and only if $m_i=2$. Observe that for all $j=0,1,\ldots,h-1$, $L_j$ divides $L_{j+1}$.

We apply Algorithm \ref{Algo} on vertices of $T^z = T^z_{m_0,m_1,\ldots,m_{h-1}}\;(z=1,2)$ of order $n$ to give an ordering $v_1,v_2,\ldots,v_n$ of $V(T^z)$ to write Randić matrix of $T^z$.  
\begin{algorithm}
\caption{A vertex ordering $v_1,v_2,\ldots,v_n$ of $V(T^z),\;z=1,2$.}\label{Algo}
\hspace*{\algorithmicindent} \textbf{Input:} A vertex set $V(T^z)$ of a tree $T^z$, where $z=1,2$.
\begin{algorithmic}[1]
\State $v_1 \la u$ for any $u \in L_h$.
\State{Set $k_0=0$ and $k_i = \sum_{t=1}^{i}|L_{h-i+1}|$ for $1 \leq i \leq h-1$.}
\For{$1 \leq i \leq h-1$}
\For{$1+k_{i-1} \leq j \leq k_i-1$}
\State{$v_{j+1} \la u$ for a $u \in L_{h-i+1}$ such that $d(v_j,v_{j+1}) \leq d(v_j,w)$ for any $w \in L_{h-i+1} \setminus \{v_1,v_2,\ldots,v_j,u\}$ and $v_{k_i+1}$ is a parent of $v_{k_{i-1}+1}$.}
\EndFor
\EndFor
\State $v_n \la r$ if $L_0 = \{r\}$ and $v_{n-1} \la r, v_n \la r'$ such that $d(v_1,v_{n-1}) = h$ and $d(v_1,v_n) = h+1$ if $L_0 = \{r,r'\}$.
\State \textbf{return} $\vec{V}:=\{v_1,v_2,\ldots,v_{n}\}$.
\end{algorithmic}
\hspace*{\algorithmicindent} \textbf{Output:} A linear order $v_1,v_2,\ldots,v_n$ of $V(T^z),\;z=1,2$.
\end{algorithm}

The readers can view the vertex ordering of tree $T^z_{m_0,m_1,\ldots,m_{h-1}}\;(z=1,2)$ describe in Algorithm \ref{Algo} in Examples \ref{EX:1} and \ref{EX:2}, respectively. We fix the ordering $\{v_1,v_2,\ldots,v_n\}$ of vertices of level-wise regular tree $T^z_{m_0,m_1,\ldots,m_{h-1}}\;(z=1,2)$ obtained by applying Algorithm \ref{Algo} for all our subsequent discussion. 

For $j = 1,2,\ldots,h$, define $C_j$ is the column matrix of order $\frac{|L_{h-j+1}|}{|L_{h-j}|} \times 1$ with each entry is equal to $\frac{1}{\sqrt{m_{h-j+1}m_{h-j}}}$. That is, 
\begin{equation*}\label{mat:Cj}
C_j = 
\begin{bmatrix}
\frac{1}{\sqrt{m_{h-j}m_{h-j+1}}} & \\
\frac{1}{\sqrt{m_{h-j}m_{h-j+1}}} & \\
\vdots & \\
\frac{1}{\sqrt{m_{h-j}m_{h-j+1}}} &  
\end{bmatrix}_{\frac{|L_{h-j+1}|}{|L_{h-j}|} \times 1}.   
\end{equation*}

For $j=1,2,\ldots,h$, let $B_j$ is the block diagonal matrix of order $|L_{h-j+1}| \times |L_{h-j}|$ defined by 
\begin{equation*}\label{mat:Bj}
B_j = 
\begin{bmatrix}
C_j & 0 & \cdots & 0\\
0 & C_j  & &\vdots\\
\vdots & & \ddots & 0\\
0 & \cdots & 0 & C_j 
\end{bmatrix}_{|L_{h-j+1}| \times |L_{h-j}|}   
\end{equation*}


\begin{Observation}\label{Obs1} Let $T^z_{m_0,m_1,\ldots,m_{h-1}}(z=1,2)$ be the level-wise regular tree. Then the following hold for all $1 \leq j \leq h$.
\begin{enumerate}[\rm (a)]
\item $|L_0| \leq |L_1| \leq \ldots \leq |L_h|$ and for $1 \leq j \leq h$, $\frac{|L_{h-j+1}|}{|L_{h-j}|}$ is a positive integer,
\item $C_j^TC_j = \frac{|L_{h-j+1}|}{|L_{h-j}|}\frac{1}{m_{h-j}m_{h-j+1}}$,
\item $B_j^{T}B_j = \begin{bmatrix}
C_j^TC_j & 0 & \cdots & 0\\
0 & C_j^TC_j  & &\vdots\\
\vdots & & \ddots & 0\\
0 & \cdots & 0 & C_j^TC_j
\end{bmatrix} = \frac{|L_{h-j+1}|}{|L_{h-j}|}\frac{1}{m_{h-j}m_{h-j+1}}\I_{|L_{h-j}|}$, 
\item $\det(B_j^{T}B_j) = \(\frac{|L_{h-j+1}|}{|L_{h-j}|}\frac{1}{m_{h-j}m_{h-j+1}}\)^{|L_{h-j}|}.$
\end{enumerate}
\end{Observation}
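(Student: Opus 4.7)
The plan is to verify the four items in order, since they chain mechanically. For (a) I would argue inductively from the layer-size recursion already recorded in the preliminaries: we have $|L_1|=m_0$ for $z=1$ (with $m_0\ge 2$) and $|L_1|=2(m_0-1)$ for $z=2$ (with $m_0\ge 3$), so in both cases $|L_1|\ge|L_0|$ and $|L_1|/|L_0|$ is a positive integer. For the higher levels, every vertex in layer $1\le i\le h-1$ is internal (it has a parent in $L_{i-1}$ and at least one child in $L_{i+1}$), so $m_i\ge 2$ and hence $m_i-1\ge 1$ is a positive integer; combined with the identity $|L_{i+1}|=(m_i-1)|L_i|$ this gives simultaneously the monotonicity $|L_i|\le|L_{i+1}|$ and the integrality of $|L_{h-j+1}|/|L_{h-j}|$ for each $1\le j\le h$.

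Item (b) is a one-line inner-product computation: by definition $C_j$ is a column whose $|L_{h-j+1}|/|L_{h-j}|$ entries are all equal to $1/\sqrt{m_{h-j}m_{h-j+1}}$, so $C_j^T C_j$ is the sum of the squares of its entries, which equals $\frac{|L_{h-j+1}|}{|L_{h-j}|}\cdot\frac{1}{m_{h-j}m_{h-j+1}}$. For (c) I would then unwind the block-diagonal product: since $B_j$ is block diagonal with $|L_{h-j}|$ diagonal blocks, each equal to $C_j$, the transpose $B_j^T$ is block diagonal with diagonal blocks $C_j^T$, and matrix multiplication block-by-block yields $B_j^T B_j=\operatorname{diag}(C_j^T C_j,\ldots,C_j^T C_j)$ with $|L_{h-j}|$ copies. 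Substituting the scalar from (b) identifies $B_j^T B_j$ with $\frac{|L_{h-j+1}|}{|L_{h-j}|}\cdot\frac{1}{m_{h-j}m_{h-j+1}}\,\I_{|L_{h-j}|}$.

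Finally, (d) is immediate from (c): the determinant of a scalar multiple of the identity of size $|L_{h-j}|$ is that scalar raised to the $|L_{h-j}|$-th power. There is no genuine obstacle anywhere in the observation; the only point that requires a little care is (a), where the $z=1$ and $z=2$ base cases for the layer-$0$ ratio must be handled separately before the uniform recursion $|L_{i+1}|=(m_i-1)|L_i|$ takes over.
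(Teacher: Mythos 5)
Your proof is correct and is exactly the routine verification the paper leaves implicit: the Observation is stated without proof, and your argument (the layer-size recursion $|L_{i+1}|=(m_i-1)|L_i|$ with $m_i\ge 2$ for (a), the sum-of-squares inner product for (b), blockwise multiplication for (c), and the determinant of a scalar matrix for (d)) is the intended justification. No gaps.
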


We illustrate the above defined terms and notations with the help of the following example of $T^1_{4,4,3}$ and $T^2_{4,3,4}$ for more clarity to readers.

\begin{Example}\label{EX:1} Let $T^1 = T^1_{4,4,3}$ be the level-wise regular tree of height $3$ as shown in the following Figure \ref{Fig1}.
\end{Example}
\begin{figure}[ht!]
\centering
\includegraphics[width=5in]{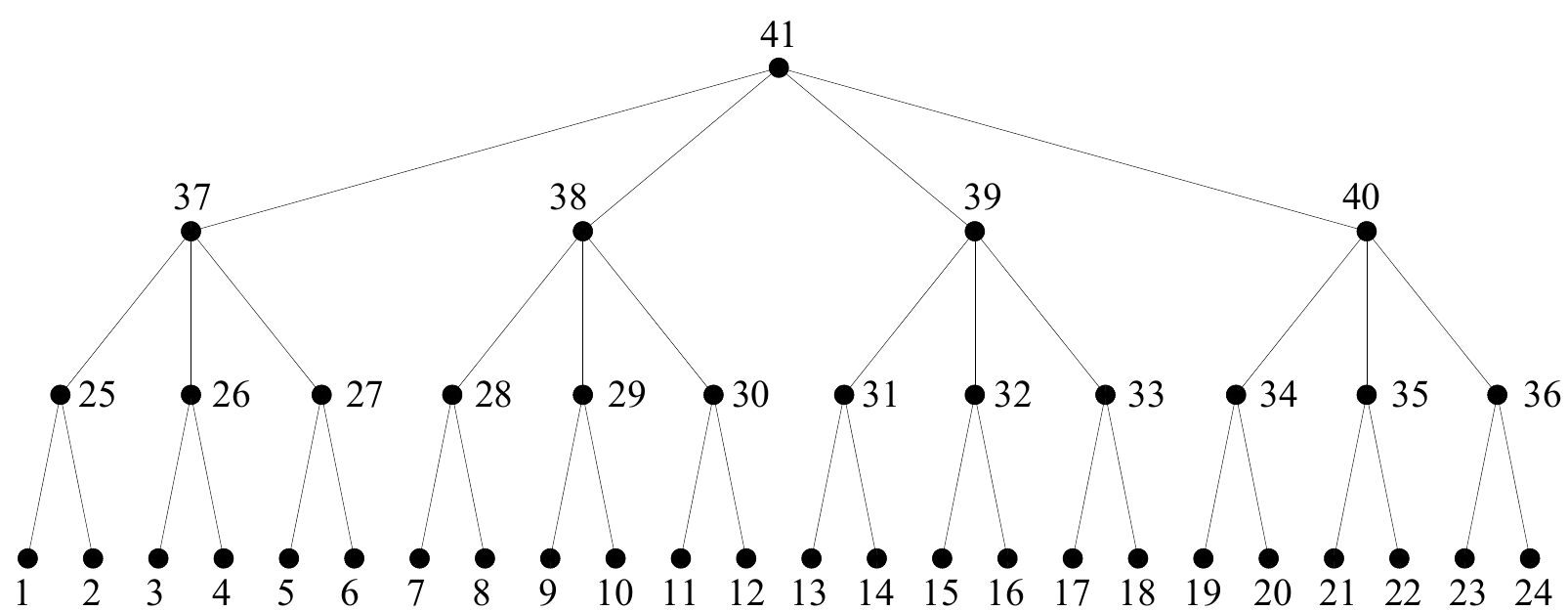}
\caption{Tree $T^1_{4,4,3}$ and its vertex ordering.}\label{Fig1}
\end{figure} 
Note that a tree $T^1_{4,4,3}$ has $4$ levels, $|L_0|=1, |L_1| = 4, |L_2|=12, |L_3| = 24$ and the vertex degrees are $m_0 = 4, m_1 = 4, m_2 = 3, m_3 = 1$. Then we have $\frac{|L_3|}{|L_2|} = 2, \frac{|L_2|}{|L_1|} = 3$ and $\frac{|L_1|}{|L_0|} = 4$. Moreover, for $j = 1,2,3$, the matrices $C_j$ are
$$C_1 = \begin{bmatrix}
\frac{1}{\sqrt3}\\[0.2cm]
\frac{1}{\sqrt3}
\end{bmatrix}_{2 \times 1}, C_2 = \begin{bmatrix}
\frac{1}{2\sqrt{3}}\\[0.2cm]
\frac{1}{2\sqrt{3}}\\[0.2cm]
\frac{1}{2\sqrt{3}}
\end{bmatrix}_{3 \times 1} \mbox{ and }C_3 = \begin{bmatrix}
\frac{1}{4}\\[0.1cm]
\frac{1}{4}\\[0.1cm]
\frac{1}{4}\\[0.1cm]
\frac{1}{4}
\end{bmatrix}_{4 \times 1}.$$
Hence, 
$$B_1 = \diag [C_1,C_1,C_1,C_1,C_1,C_1,C_1,C_1,C_1,C_1,C_1,C_1]$$ 
$$B_2 = \diag [C_2,C_2,C_2,C_2]$$ 
and 
$$B_3 = \diag [C_3].$$
The Randić matrix of the tree $T^1_{4,4,3}$ is 
$$\R(T^1) = \begin{bmatrix}
0 & B_1 & 0 & 0\\
B_1^{T} & 0 & B_2 & 0\\
0 & B_2^{T} & 0 & B_3\\
0 & 0 & B_3^{T} & 0
\end{bmatrix}.$$

\begin{Example}\label{EX:2} Let $T^2 = T^2_{4,3,4}$ be the level wise regular tree of height $3$ as shown in the following Figure \ref{Fig2}.
\end{Example}
\begin{figure}[ht!]
\centering
\includegraphics[width=6in]{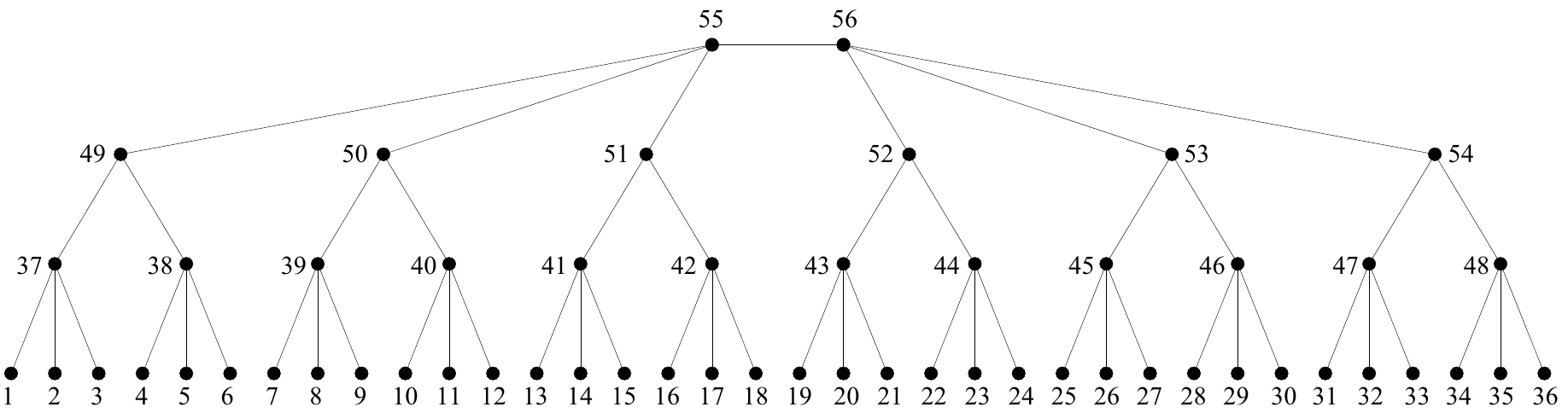}
\caption{Tree $T^2_{4,3,4}$ and its vertex ordering.}\label{Fig2}
\end{figure}
Note that a tree $T^2_{4,3,4}$ has $4$ levels, $|L_0| = 2$, $|L_1| = 6$, $|L_2| = 12$, $|L_3| = 36$ and the vertex degrees are $m_0=4$, $m_1=3$, $m_2=4$, $m_3=1$. Then we have $\frac{|L_3|}{|L_2|} = 3$, $\frac{|L_2|}{|L_1|}=2$ and $\frac{|L_1|}{|L_0|}=3$. Moreover, for $j=1,2,3$, the matrices $C_j$ are 
$$C_1 = \begin{bmatrix}
\frac{1}{2}\\[0.2cm]
\frac{1}{2}\\[0.2cm]
\frac{1}{2}
\end{bmatrix}_{3 \times 1}, C_2 = \begin{bmatrix}
\frac{1}{2\sqrt{3}}\\[0.2cm]
\frac{1}{2\sqrt{3}}
\end{bmatrix}_{2 \times 1} \mbox{ and }C_3 = \begin{bmatrix}
\frac{1}{2\sqrt{3}}\\[0.1cm]
\frac{1}{2\sqrt{3}}\\[0.1cm]
\frac{1}{2\sqrt{3}}
\end{bmatrix}_{3 \times 1}.$$
Hence, 
$$B_1 = \diag [C_1,C_1,C_1,C_1,C_1,C_1,C_1,C_1,C_1,C_1,C_1,C_1]$$ 
$$B_2 = \diag [C_2,C_2,C_2,C_2,C_2,C_2]$$ 
$$B_3 = \diag [C_3,C_3]$$
and let  
$$\Omega = \begin{bmatrix}
    0 & \frac{1}{4} \\
    \frac{1}{4} & 0
\end{bmatrix}.$$
The Randić matrix of the tree $T^2_{4,3,4}$ is 
$$\R(T^2) = \begin{bmatrix}
0 & B_1 & 0 & 0 \\
B_1^{T} & 0 & B_2 & 0 \\
0 & B_2^{T} & 0 & B_3 \\
0 & 0 & B_3^{T} & \Omega
\end{bmatrix}.$$

In general, the Randić matrix of level-wise regular tree $T^z = T^z_{m_0,m_1,\ldots,m_{h-1}}\;(z=1,2)$ of order $n$ is given by
$$\R(T^z) = \begin{bmatrix}
0 & B_1 & 0 & \cdots & \cdots & 0\\
B_1^{T} & 0 & B_2 & \ddots & & \vdots\\
0 & B_2^{T} & \ddots & \ddots & \ddots & \vdots \\
\vdots & \ddots & \ddots & \ddots & B_{h-1} & 0\\
\vdots & & \ddots & B_{h-1}^{T} & 0 & B_{h}\\
0 & \cdots & \cdots & 0 & B_{h}^{T} & \Omega
\end{bmatrix},$$ 
where 
$$
\Omega = \left\{
\begin{array}{ll}
0, & \mbox{ if $z=1$} , \\ [0.2cm]
\begin{bmatrix}
0 & \frac{1}{m_0} \\
\frac{1}{m_0} & 0
\end{bmatrix}, & \mbox{ if $z=2$}.
\end{array}
\right.
$$

We recall some well known results (refer \cite{Cvetkovic1980} and \cite{Trefethen1997}) which are useful to prove our main results.

\begin{Theorem}[Eigenvalue Interlacing Theorem](refer \cite{Cvetkovic1980}) \label{thm:interl} Suppose $A \in \mathbb{R}^{n \times n}$ is symmetric. Let $B \in \mathbb{R}^{m \times m}$ with $m < n$ be a principal submatrix (obtained by deleting both $i$-th row and $i$-th column for some value of $i$). Suppose $A$ has eigenvalues $\l_1 \leq \ldots \leq \l_n$ and $B$ has eigenvalues $\l'_1 \leq \ldots \leq \l'_m$. Then 
$$\l_k \leq \l'_k \leq \l_{k+n-m} \mbox{ for } k=1,2,\ldots,m.$$
And if $m=n-1$ then 
$$\l_1 \leq \l'_1 \leq \l_2 \leq \l'_2 \leq \ldots \leq \l'_{n-1} \leq \l_n.$$
\end{Theorem}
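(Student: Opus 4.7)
The plan is to establish the interlacing via the Courant--Fischer min-max characterization of eigenvalues of symmetric matrices, a standard tool that makes the argument essentially coordinate-free once the right subspaces are chosen. Recall that for a symmetric $A \in \mathbb{R}^{n \times n}$ with eigenvalues $\l_1 \leq \ldots \leq \l_n$, one has
$$\l_k \;=\; \min_{\substack{V \subseteq \mathbb{R}^n \\ \dim V = k}} \; \max_{0 \neq x \in V} \frac{x^{T}Ax}{x^{T}x} \;=\; \max_{\substack{V \subseteq \mathbb{R}^n \\ \dim V = n-k+1}} \; \min_{0 \neq x \in V} \frac{x^{T}Ax}{x^{T}x}.$$
I would first state this characterization, and then set up the geometric identification that links $A$ and $B$.

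Next, since $B$ is obtained from $A$ by deleting the rows and columns indexed by some set $I$ with $|I| = n-m$, I would identify $\mathbb{R}^m$ with the coordinate subspace $W = \{x \in \mathbb{R}^n : x_i = 0 \text{ for } i \in I\}$ of dimension $m$. The key observation is that for every $y \in \mathbb{R}^m$ and its natural embedding $\tilde{y} \in W$, one has $\tilde{y}^{T}A\tilde{y} = y^{T}By$ and $\tilde{y}^{T}\tilde{y} = y^{T}y$, so Rayleigh quotients are preserved.

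For the lower bound $\l_k \leq \l'_k$, I would take any $k$-dimensional subspace $V' \subseteq \mathbb{R}^m$; its embedding $\widetilde{V'} \subseteq W \subseteq \mathbb{R}^n$ is a $k$-dimensional subspace of $\mathbb{R}^n$, so
$$\l_k \;\leq\; \max_{0 \neq x \in \widetilde{V'}} \frac{x^{T}Ax}{x^{T}x} \;=\; \max_{0 \neq y \in V'} \frac{y^{T}By}{y^{T}y},$$
and minimising the right-hand side over all $k$-dimensional $V' \subseteq \mathbb{R}^m$ gives $\l_k \leq \l'_k$. For the upper bound $\l'_k \leq \l_{k+n-m}$, I would apply the max-min form: any $(m-k+1)$-dimensional subspace $V' \subseteq \mathbb{R}^m$ embeds as an $(m-k+1)$-dimensional subspace of $\mathbb{R}^n$, which is exactly the dimension $n - (k+n-m) + 1$ relevant for $\l_{k+n-m}$, and the Rayleigh-quotient identity gives the inequality after taking maximum over such subspaces. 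The case $m = n-1$ is the specialization that collapses to the standard Cauchy interlacing string.

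The proof is essentially bookkeeping; the only mild subtlety is keeping the two Courant--Fischer characterizations (min-max versus max-min) straight and matching the correct one to each of the two inequalities, together with the dimension count $n - (k+n-m) + 1 = m - k + 1$. No obstacle goes beyond this, and since the result is attributed to the cited texts \cite{Cvetkovic1980,Trefethen1997}, the authors likely only intend to quote it rather than reprove it.
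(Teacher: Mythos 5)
The paper does not prove this theorem: it is quoted as a classical result with a citation to \cite{Cvetkovic1980}, so there is no in-paper argument to compare yours against. Your proposal is the standard and correct proof of Cauchy interlacing via the Courant--Fischer min-max principle: the Rayleigh-quotient identity $\tilde{y}^{T}A\tilde{y} = y^{T}By$ on the coordinate subspace $W$ is exactly the right bridge, the min-max form gives $\l_k \leq \l'_k$, the max-min form with the dimension count $n-(k+n-m)+1 = m-k+1$ gives $\l'_k \leq \l_{k+n-m}$, and the $m=n-1$ case is the immediate specialization. One small remark: the paper's parenthetical ``deleting both $i$-th row and $i$-th column for some value of $i$'' literally describes only $m=n-1$, whereas the stated inequality is for general $m<n$; your reading (deleting a set $I$ of $n-m$ indices) is the one that matches the claimed conclusion and is the right way to interpret the statement.
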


\begin{Lemma}\cite{Trefethen1997}\label{Prop1} Let 
$$M_j = \begin{bmatrix}
p_1 & q_1 & 0 & \cdots & \cdots & 0\\
q_1 & p_2 & q_2 & \ddots & & \vdots\\
0 & q_2 & \ddots & \ddots & \ddots & \vdots \\
\vdots & \ddots & \ddots & \ddots & \ddots & 0\\
\vdots & & \ddots & \ddots & p_{j-1} & q_{j-1}\\
0 & \cdots & \cdots & 0 & q_{j-1} & p_j
\end{bmatrix}$$
and $M_j(\l) = \det(\l\I_j-M_j)$. Then $M_j(\l)$ satisfies the following three-term recursion formula
\begin{equation*}
M_j(\lambda)=(\lambda-p_j)M_{j-1}(\lambda)-q^2_{j-1}M_{j-2}(\lambda)
\end{equation*} with $M_0(\lambda)=1$ and $M_1(\lambda)=\lambda-a_1$.
\end{Lemma}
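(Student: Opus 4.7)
The plan is to prove the recursion by Laplace (cofactor) expansion of $\det(\lambda\I_j - M_j)$ along the last row, exploiting the tridiagonal sparsity. First I would handle the base cases: $M_0(\lambda)=1$ is the empty-product convention, and $M_1(\lambda) = \det[\lambda - p_1] = \lambda - p_1$ is immediate. Then for $j \geq 2$, I would write $A := \lambda \I_j - M_j$ and note that its last row has only two nonzero entries: $A_{j,j-1} = -q_{j-1}$ and $A_{j,j} = \lambda - p_j$. Hence the cofactor expansion reduces to exactly two terms,
\begin{equation*}
\det(A) = (-q_{j-1})\, C_{j,j-1} + (\lambda - p_j)\, C_{j,j},
\end{equation*}
where $C_{j,k}$ denotes the $(j,k)$ cofactor of $A$.

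Next I would evaluate the two cofactors. The easy one is $C_{j,j} = (-1)^{2j}\det(\lambda \I_{j-1} - M_{j-1}) = M_{j-1}(\lambda)$, since deleting the last row and column of $A$ leaves the tridiagonal matrix associated with the leading principal block $M_{j-1}$. For the other, $C_{j,j-1} = (-1)^{2j-1}\,m_{j,j-1} = -m_{j,j-1}$, where $m_{j,j-1}$ is the $(j-1) \times (j-1)$ minor obtained by deleting row $j$ and column $j-1$. The key observation I would use here is that in this minor the last column (inherited from column $j$ of $A$) has a single nonzero entry, namely $-q_{j-1}$ in the bottom position, because $A_{i,j}=0$ for $i \leq j-2$. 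Expanding this minor along its last column therefore produces a block-triangular type reduction yielding $m_{j,j-1} = -q_{j-1}\det(\lambda \I_{j-2} - M_{j-2}) = -q_{j-1} M_{j-2}(\lambda)$, so $C_{j,j-1} = q_{j-1} M_{j-2}(\lambda)$.

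Substituting back gives
\begin{equation*}
M_j(\lambda) = (-q_{j-1})\cdot q_{j-1} M_{j-2}(\lambda) + (\lambda - p_j)\, M_{j-1}(\lambda) = (\lambda - p_j)M_{j-1}(\lambda) - q_{j-1}^{2}\, M_{j-2}(\lambda),
\end{equation*}
which is the claimed recursion. There is no real obstacle here; the only point requiring care is the bookkeeping of the two sign factors $(-1)^{2j-1}$ from the cofactor and the second expansion, together with tracking that the residual sub-minor is exactly $\lambda \I_{j-2} - M_{j-2}$ so that it identifies with $M_{j-2}(\lambda)$. Since the argument is purely structural in the tridiagonal pattern, it does not use any property of the numbers $p_i, q_i$ beyond symmetry in the sub-/super-diagonals being $q_i$ (which is why $q_{j-1}^2$ appears).
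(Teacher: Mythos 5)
Your proof is correct, and it is the standard argument: cofactor expansion of $\det(\lambda\I_j-M_j)$ along the last row, followed by expansion of the $(j,j-1)$ minor along its last column, with the sign bookkeeping $(-1)^{2j-1}\cdot(-1)^{2j-2}$ done correctly so that the cross term comes out as $-q_{j-1}^2M_{j-2}(\lambda)$. The paper itself gives no proof of this lemma (it is quoted from the reference \cite{Trefethen1997}), so there is nothing to compare against; note only that the paper's stated base case $M_1(\lambda)=\lambda-a_1$ contains a typo for $\lambda-p_1$, which you have silently and correctly fixed.
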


\begin{Lemma}\label{lem2} Let $T^z=T^z_{m_0,m_1,\ldots,m_{h-1}}\;(z=1,2)$ be the level-wise regular tree, $L_i$'s are defined as earlier and 
$$D = \begin{bmatrix}
-\I_{|L_h|}&0&0&\cdots&\cdots&0\\
0&\I_{|L_{h-1}|}&0&\ddots&\ddots&\vdots\\
0&0&-\I_{|L_{h-2}|}&\ddots&\ddots&\vdots\\
\vdots&\ddots&\ddots&\ddots&\ddots0\\
\vdots&\ddots&\ddots&\ddots&(-1)^{h}\I_{|L_{1}|}&0\\
0&\cdots&\cdots&0&0&(-1)^{h+1}\I_{|L_0|}
\end{bmatrix} 
$$ 
then $$D(\l\I+\R(T^z))D^{-1}=\l \I-\R(T^z).$$
In particular, $$\det(\l\I-\R(T^z)) = \det(\l\I+\R(T^z)).$$ 
\end{Lemma}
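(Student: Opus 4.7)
The plan is to verify the block identity $DR(T^z)D^{-1} = -R(T^z)$, from which both conclusions of the lemma follow at once: adding $\l\I = D(\l\I)D^{-1}$ to both sides yields $D(\l\I+R(T^z))D^{-1}=\l\I-R(T^z)$, and taking determinants gives $\det(\l\I-R(T^z)) = \det(D)\det(\l\I+R(T^z))\det(D^{-1}) = \det(\l\I+R(T^z))$ since $\det(D)\det(D^{-1})=1$. So everything reduces to the matrix identity $DR(T^z)D^{-1} = -R(T^z)$.

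To prove it, I would partition $D$ and $R(T^z)$ into the same $(h+1)\times(h+1)$ block structure induced by the layer decomposition $L_h, L_{h-1},\ldots, L_0$ used to display $R(T^z)$ above. Because $D$ is block-scalar with strictly alternating signs $s_0 = -1, s_1 = +1,\ldots, s_h = (-1)^{h+1}$ on consecutive blocks, conjugation by $D$ multiplies the $(i,j)$ block of $R(T^z)$ by $s_i s_j^{-1}$. The key observation is that every nonzero off-diagonal block of $R(T^z)$ (that is, the sub- and super-diagonal blocks $B_j$ and $B_j^{T}$) connects consecutive layers, so its scalar factor under conjugation is $s_i s_{i\pm 1}^{-1} = -1$; hence every such block is negated.

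For $T^1$ all diagonal blocks of $R(T^1)$ vanish, so the alternating-sign computation closes the argument immediately. For $T^2$ the only extra contribution is the bottom-right $2\times 2$ block $\Omega$ encoding the edge between the two roots $r$ and $r'$; here I would do a direct $2\times 2$ computation to confirm that the $L_0$-block of $D$ also produces the required negation of this block (using that $r$ and $r'$ lie in opposite bipartition classes of the tree). The bulk of the work is then formal block-by-block assembly, and the final determinantal equality is one line.

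The main obstacle, and the place I would spend the most care, is precisely the $\Omega$-block check for $z=2$: it is the only point at which one has to look inside a single $L_j$-block rather than treating it as a uniform scalar times the identity. Once that step is verified, the global identity $DR(T^z)D^{-1}=-R(T^z)$ follows by combining the scalar-conjugation rule with the tridiagonal-in-blocks structure of $R(T^z)$, and the statement of the lemma is then an immediate consequence.
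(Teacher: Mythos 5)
Your overall strategy is the paper's own: conjugate blockwise by the alternating-sign block-diagonal matrix $D$ and observe that every sub/super-diagonal block $B_j$, $B_j^{T}$ connects consecutive layers and therefore picks up the factor $s_is_{i\pm1}^{-1}=-1$. For $z=1$ this is complete and correct. The genuine gap is exactly at the step you deferred: the $\Omega$-block check for $z=2$ does not work out. The last diagonal block of $D$ is the \emph{scalar} matrix $(-1)^{h+1}\I_{|L_0|}$, so conjugation sends the bottom-right block $\l\I_{|L_0|}+\Omega$ of $\l\I+\R(T^2)$ to $(-1)^{h+1}\I_{|L_0|}\,(\l\I_{|L_0|}+\Omega)\,(-1)^{h+1}\I_{|L_0|}=\l\I_{|L_0|}+\Omega$, not to $\l\I_{|L_0|}-\Omega$. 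Your parenthetical justification --- that $r$ and $r'$ lie in opposite bipartition classes --- is true of the tree but is not respected by $D$, which assigns both roots the \emph{same} sign $(-1)^{h+1}$. So the identity $D(\l\I+\R(T^2))D^{-1}=\l\I-\R(T^2)$ fails for $z=2$ with the stated $D$; the direct $2\times 2$ computation you propose would refute it rather than confirm it. (The paper's own proof commits the same sleight of hand: its displayed blockwise product writes the bottom corner as $(-1)^{h+1}\I_{|L_0|}(\l\I_{|L_0|}-\Omega)(-1)^{h+1}\I_{|L_0|}$, with a minus sign on $\Omega$ that is not present in $\l\I+\R(T^2)$.)

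To repair the $z=2$ case you must use the genuine bipartition sign matrix: give the descendants of $r$ in layer $L_i$ the sign $(-1)^i$ and the descendants of $r'$ the sign $(-1)^{i+1}$, i.e.\ take $D=\diag(\epsilon_v)$ with $\epsilon_v=(-1)^{d(v,r)}$. This $D$ is no longer scalar on each layer, but every nonzero entry of $\R(T^2)$ --- including the two off-diagonal entries of $\Omega$ --- joins vertices of opposite sign and is therefore negated, which gives $D\R(T^2)D^{-1}=-\R(T^2)$ and hence the determinant identity. Alternatively, one can bypass the explicit $D$ and quote the standard fact that a symmetric matrix whose underlying graph is bipartite is similar to its negative via the $\pm1$ diagonal matrix of the bipartition. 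Either way, the conclusion $\det(\l\I-\R(T^z))=\det(\l\I+\R(T^z))$ --- which is all that is used later in the paper --- survives; only the specific similarity with the block-scalar $D$ of the statement does not.
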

\begin{proof} It is clear that $D = D^{-1}$. Computing the multiplication block wise, we have $D(\l\I+\R(T^z))D^{-1}$
$$= \begin{bmatrix}
-\I_{|L_h|}\cdot \l\I_{|L_h|} \cdot -\I_{|L_h|}&-\I_{|L_h|}\cdot B_1 \cdot \I_{|L_{h-1}}|&\cdots&\cdots&0\\
\I_{|L_{h-1}|} \cdot B_1^{T} \cdot -\I_{|L_h|}&\I_{|L_{h-1}|}\cdot \l\I_{|L_{h-1}|}\cdot \I_{|L_{h-1}|}&\ddots&\ddots&\vdots\\
\vdots&\ddots&\ddots&\ddots & \vdots\\
\vdots&\ddots&\ddots&\ddots& \vdots\\
0&\cdots&\cdots& \cdots &(-1)^{h+1}\I_{|L_0|} (\l\I_{|L_0|}-\Omega)(-1)^{h+1}\I_{|L_0|}
\end{bmatrix} 
$$ 
$$\hspace{-5.3cm} = \begin{bmatrix}
\l\I_{|L_h|} & -B_1 & 0 & \cdots & \cdots & 0\\
-B_1^{T} & \l\I_{|L_{h-1}|} & -B_2 & \ddots & & \vdots\\
0 & -B_2^{T} & \ddots & \ddots & \ddots & \vdots \\
\vdots & \ddots & \ddots & \ddots & -B_{h-1} & 0\\
\vdots & & \ddots & -B_{h-1}^{T} & \l\I_{|L_1|} & -B_{h}\\
0 & \cdots & \cdots & 0 & -B_{h}^{T} & \l\I_{|L_0|}-\Omega
\end{bmatrix}.$$
= $\l\I-\R(T^z)$.
\end{proof}
 
\section{The spectrum of the Randić matrix of level-wise regular tree}\label{Main}

We continue to use terms and notations defined in the previous sections. We prove the following main result in this section. 
\begin{Theorem}\label{thm:main} Let $T^z=T^z_{m_0,m_1,\ldots,m_{h-1}}\;(z=1,2)$ be the level-wise regular tree, $L^i$'s are defined as earlier and $\psi = \{j : j \in \{1,2,\ldots,h\} \mbox{ and } |L_{h-j+1}| > |L_{h-j}|\}$. For $j=1,2,\ldots,h$, let $P_j$ be the $j \times j$ principal sub matrix of the $(h+1) \times (h+1)$ symmetric tridiagonal matrix 
$$P_{h+1} = \begin{bmatrix}
0 & \sqrt{\left(\frac{m_{h-1}-1}{m_hm_{h-1}}\right)} & 0 & \cdots & \cdots & 0\\
\sqrt{\left(\frac{m_{h-1}-1}{m_hm_{h-1}}\right)} & 0 & \sqrt{\left(\frac{m_{h-2}-1}{m_{h-1}m_{h-2}}\right)} & \ddots & & \vdots\\
0 & \sqrt{\left(\frac{m_{h-2}-1}{m_{h-1}m_{h-2}}\right)} & \ddots & \ddots & \ddots & \vdots \\
\vdots & \ddots & \ddots & \ddots & \sqrt{\left(\frac{m_1-1}{m_2m_1}\right)} & 0\\
\vdots & & \ddots & \sqrt{\left(\frac{m_1-1}{m_2m_1}\right)} & 0 & \sqrt{\frac{1}{m_1}}\\
0 & \cdots & \cdots & 0 & \sqrt{\frac{1}{m_{1}}} & 0
\end{bmatrix}.$$
and for $z=1,2$,
$$P_{h+1}^z = \begin{bmatrix}
0 & \sqrt{\left(\frac{m_{h-1}-1}{m_hm_{h-1}}\right)} & 0 & \cdots & \cdots & 0\\
\sqrt{\left(\frac{m_{h-1}-1}{m_hm_{h-1}}\right)} & 0 & \sqrt{\left(\frac{m_{h-2}-1}{m_{h-1}m_{h-2}}\right)} & \ddots & & \vdots\\
0 & \sqrt{\left(\frac{m_{h-2}-1}{m_{h-1}m_{h-2}}\right)} & \ddots & \ddots & \ddots & \vdots \\
\vdots & \ddots & \ddots & \ddots & \sqrt{\left(\frac{m_1-1}{m_2m_1}\right)} & 0\\
\vdots & & \ddots & \sqrt{\left(\frac{m_1-1}{m_2m_1}\right)} & 0 & \sqrt{\left(\frac{m_0-1)}{m_1m_0}\right)}\\
0 & \cdots & \cdots & 0 & \sqrt{\left(\frac{m_0-1}{m_{1}m_0}\right)} & \frac{(-1)^z}{m_0}
\end{bmatrix}.$$
Then 
\begin{enumerate}[\rm (a)]
\item $\sigma(\R(T^1))=\left(\displaystyle\bigcup_{j\in\psi} \sigma(P_j) \right) \cup \sigma(P_{h+1})$ and $\sigma(\R(T^2))=\left(\displaystyle\bigcup_{j\in\psi} \sigma(P_j) \right) \cup \sigma(P^1_{h+1}) \cup \sigma(P^2_{h+1})$.
\item The multiplicity of each eigenvalue of the matrix $P_j$ as an eigenvalue of $\R(T)$ is at least $(|L_{h-j+1}|-|L_{h-j}|)$ for $j \in \psi$ and 1 for $j=h+1$.
\end{enumerate}
\end{Theorem}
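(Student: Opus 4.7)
The plan is to establish the theorem by exhibiting an explicit orthogonal eigen-decomposition of $\R(T^z)$ whose blocks are the tridiagonal matrices in the statement, with the claimed multiplicities. The decomposition is organised around two mutually orthogonal families of eigenvectors: layer-constant vectors coming from the equitable partition $\{L_0,\ldots,L_h\}$, and, for each $j\in\psi$, vectors supported on a single sibling group at level $L_{h-j+1}$ subject to a sum-zero constraint.

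For the layer-constant family, the partition $\pi=\{L_0,\ldots,L_h\}$ is equitable for $\R(T^z)$ by the level-wise regularity. Working with the normalized indicators $\hat s_i=|L_i|^{-1/2}\chi_{L_i}$ one computes
\[
\hat s_i^{T}\R(T^z)\hat s_{i+1}=\frac{|L_{i+1}|}{\sqrt{|L_i||L_{i+1}|}}\cdot\frac{1}{\sqrt{m_im_{i+1}}}=\sqrt{\frac{m_i-1}{m_im_{i+1}}},\qquad 1\le i\le h-1,
\]
using $|L_{i+1}|=(m_i-1)|L_i|$, and checks that the endpoint entries match the first and last rows of $P_{h+1}$. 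In the case $z=2$, the $(r,r')$ edge contributes the additional diagonal entry $1/m_0$ in the $L_0$-block, producing $P^{2}_{h+1}$. Moreover, the involution $\sigma$ swapping $r\leftrightarrow r'$ together with the two half-subtrees commutes with $\R(T^2)$; restricting to its $(-1)$-eigenspace, the antisymmetric layer basis $\hat s_i^-$ (assigning $\pm|L_i|^{-1/2}$ on the two $\sigma$-orbits of $L_i$) reproduces the same tridiagonal off-diagonals, but the $L_0$-diagonal flips to $-1/m_0$ since $(\R\hat s_0^-)_r=(1/m_0)(\hat s_0^-)_{r'}=-(1/m_0)(\hat s_0^-)_r$, yielding $P^{1}_{h+1}$. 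This accounts for $h+1$ (resp.\ $2(h+1)$) eigenvalues of $\R(T^z)$.

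For the sibling-group family, fix $j\in\psi$, a parent $p\in L_{h-j}$ with children $c_1,\ldots,c_{k_j}\in L_{h-j+1}$ where $k_j=|L_{h-j+1}|/|L_{h-j}|\ge 2$, a vector $(\alpha_1,\ldots,\alpha_{k_j})$ satisfying $\sum_i\alpha_i=0$, and an eigenvector $(a_1,\ldots,a_j)$ of $P_j$ with eigenvalue $\mu$. Define
\[
x_v=\alpha_i\cdot\frac{a_{j-s+1}}{\sqrt{|L_{h-j+s}|/|L_{h-j+1}|}}\quad\text{whenever $v\in L_{h-j+s}$ lies in the subtree rooted at $c_i$},
\]
and $x_v=0$ otherwise. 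I would verify $\R(T^z)x=\mu x$ in three steps: (a) at every vertex $u$ outside the union of these subtrees the only potential non-vanishing contribution is at $u=p$, where $(\R x)_p=(a_j/\sqrt{m_{h-j}m_{h-j+1}})\sum_i\alpha_i=0=\mu x_p$; (b) at $u$ interior to one of the subtrees the equation reduces to a non-symmetric tridiagonal three-term recursion $Mb=\mu b$ on the $j$ subtree-values (including the boundary $s=1$ at $c_i$, where the parent $p$ drops out, and the boundary $s=j$ at a leaf, where $m_h=1$ kills the child term); and (c) the diagonal conjugation $D=\mathrm{diag}\bigl(\sqrt{|L_{h-j+s}|/|L_{h-j+1}|}\bigr)_{s=1}^{j}$ symmetrises $M$ into the reverse of $P_j$, so $b$ corresponds under the conjugation to an eigenvector of $P_j$ with the same eigenvalue $\mu$. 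For distinct parents $p\in L_{h-j}$ the resulting vectors have disjoint supports, and for each $p$ the $(k_j-1)$-dimensional space of sum-zero $\alpha$'s produces $k_j-1$ linearly independent eigenvectors per eigenvalue of $P_j$, giving a total multiplicity $(k_j-1)|L_{h-j}|=|L_{h-j+1}|-|L_{h-j}|$.

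The final step is a completeness check via dimension count and linear independence. The layer-constant eigenvectors are orthogonal to every sum-zero construction (pairing a constant vector with a sum-zero vector gives zero), and vectors from different $j$'s are separated by a top-layer argument, looking at the highest layer where something is supported and peeling off contributions from the largest $j$ downwards. The telescoping identity
\[
\sum_{j=1}^{h}j\bigl(|L_{h-j+1}|-|L_{h-j}|\bigr)=\sum_{i=1}^{h}|L_i|-h|L_0|,
\]
together with $|V(T^z)|=\sum_{i=0}^{h}|L_i|$ and the $\kappa(h+1)$ eigenvectors from the layer-constant side ($\kappa=1$ for $z=1$, $\kappa=2$ for $z=2$), delivers exactly $n$ eigenvectors, so the constructed list is a basis and both parts (a) and (b) follow. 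I expect the main obstacle to be step (c): one must write out the three-term recursion at the top ($s=1$, boundary with $p$), in the bulk ($2\le s\le j-1$), and at the bottom ($s=j$, leaf level), then identify the scaling factors $\sqrt{|L_{h-j+s}|/|L_{h-j+1}|}=\sqrt{\prod_{k=1}^{s-1}(m_{h-j+k}-1)}$ that symmetrise the resulting non-symmetric tridiagonal matrix into precisely the reverse of $P_j$, with off-diagonal entries $\sqrt{(m_{h-i}-1)/(m_{h-i+1}m_{h-i})}$ matching the definition in the statement.
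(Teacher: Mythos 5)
Your proposal is correct, but it takes a genuinely different route from the paper. The paper never constructs eigenvectors: it applies block Gaussian elimination to $\lambda\I-\R(T^z)$ (Lemma 3.2), expresses the pivots $\beta_j$ as ratios $\phi_j(\lambda)/\phi_{j-1}(\lambda)$ of polynomials satisfying a three-term recursion, obtains the factorization $\det(\lambda\I-\R(T^z))=\phi_{h+1}(\lambda)\prod_{j\in\psi}\phi_j^{|L_{h-j+1}|-|L_{h-j}|}(\lambda)$ (with the extra factor $\phi_{h+1}\pm\frac{1}{m_0}\phi_h$ for $z=2$), and finally identifies each $\phi_j$ with $\det(\lambda\I-P_j)$ via the standard recursion for tridiagonal determinants. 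You instead exhibit an explicit orthogonal eigenbasis: the equitable-partition quotient (split into symmetric/antisymmetric parts when $z=2$) yields $P_{h+1}$, respectively $P^1_{h+1}$ and $P^2_{h+1}$, and the sum-zero sibling vectors, after the diagonal rescaling by $\sqrt{|L_{h-j+s}|/|L_{h-j+1}|}$, yield the reversed $P_j$ with multiplicity $(k_j-1)|L_{h-j}|=|L_{h-j+1}|-|L_{h-j}|$; your telescoping count and the orthogonality of layer-constant vectors against sum-zero vectors close the argument. I checked the key computations (the quotient entries $\sqrt{(m_i-1)/(m_im_{i+1})}$, the endpoint entries $\sqrt{1/m_1}$ and $\sqrt{(m_0-1)/(m_1m_0)}$, the sign flip $-1/m_0$ on the antisymmetric part, the symmetrization in your step (c), and the dimension identity) and they are all right. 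Your approach buys a cleaner proof of part (b) — the multiplicities come with explicit eigenvectors rather than from exponents in a determinant — and it avoids the paper's delicate case analysis of vanishing pivots for indices in $[1,h]\setminus\psi$ (which forces the paper to detour through $\det(\lambda\I+\R)$ via Lemma 2.4 and row interchanges); what the paper's route buys is the full characteristic polynomial as an explicit product, not just the spectrum. The only places where you should add detail in a write-up are the linear independence of sibling vectors built at nested levels (the clean statement is that a level-$j$ vector is constant on each layer of any lower sibling group while a level-$j'$ vector sums to zero there, so the whole family is orthogonal) and the bulk/boundary verification of the three-term recursion in step (c), which you have correctly flagged as the main computation.
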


We first prove some important Lemmas which are useful to prove our main results.  
 
\begin{Lemma}\label{lm:1} Let $T^z_{m_0,m_1,\ldots,m_{h-1}}\;(z=1,2)$ be the level-wise regular tree and $B_i$'s are as defined earlier. Let 
$$P = \begin{bmatrix}
\alpha_{1}\I_{|L_{h}|} & B_1 & 0 & \cdots & \cdots & \cdots & 0\\
B_1^{T} & \alpha_{2}\I_{|L_{h-1}|} & B_2 & \ddots & & \vdots & &\\
0 & B_2^{T} &  &  & \ddots &  &  &\\
\vdots & \ddots &  &  &  & \ddots &  &  &\\
\vdots &  & \ddots &  & \alpha_{h-1}\I_{|L_2|} & B_{h-1} & 0\\
\vdots &  &  & \ddots & B_{h-1}^{T} & \alpha_{h}\I_{|L_1|} & B_{h}\\
0 & \cdots & \cdots & \cdots & 0 &  B_{h}^{T} & \Omega'
\end{bmatrix}, 
$$
where 
$$
\Omega' = \left\{
\begin{array}{ll}
\alpha_{h+1}, & \mbox{ if $z=1$} , \\ [0.2cm]
\begin{bmatrix}
\alpha_{h+1} & \frac{1}{m_0} \\
\frac{1}{m_0} & \alpha_{h+1}
\end{bmatrix}, & \mbox{ if $z=2$}.
\end{array}
\right.
$$
Let $\beta_1=\alpha_1$
and 
$\beta_j=\alpha_j-\frac{|L_{h-j+2}|}{|L_{h-j+1}|}\frac{1}{(m_{h-j+1}m_{h-j+2})\beta_{j-1}}, j=2,3,\dots,h+1$.
If $\beta_{j} \neq 0$ for all $j=1,2,\dots,h$ and 
$$
\beta_{h+1} \neq \left\{
\begin{array}{ll}
0, & \mbox{ if $z=1$} , \\ [0.2cm]
\pm\frac{1}{m_0} , & \mbox{ if $z=2$}.
\end{array}
\right.
$$
Then
\begin{equation}\label{eq:2}
\det P = \left\{
\begin{array}{ll}
\beta_1^{|L_h|}\beta_2^{|L_{h-1}|}\cdots\beta_{h-1}^{|L_2|}\beta_{h}^{|L_1|}\beta_{h+1}, & \mbox{ if $z=1$} , \\ [0.2cm]
\beta_1^{|L_h|}\beta_2^{|L_{h-1}|}\cdots\beta_{h-1}^{|L_2|}\beta_{h}^{|L_1|}\left(\beta_{h+1}+\frac{1}{m_0}\right)\left(\beta_{h+1}-\frac{1}{m_0}\right), & \mbox{ if $z=2$}.
\end{array}
\right.
\end{equation}
\begin{proof}
Suppose $\beta_j\neq 0$ for all $j=1,2,\dots,h+1$. We apply Gauss elimination procedure to reduce the matrix $P$ to an upper triangular matrix without row interchanges. Then by Observation \ref{Obs1}, we have the following matrix just before the last step.  
$$\begin{bmatrix}
\beta_{1}\I_{|L_h|} & B_1 & 0 & \cdots & \cdots & \cdots & 0\\
0 & \beta_{2}\I_{|L_{h-1}|} & B_2 & \ddots & & & \vdots \\
0 & 0 &  &  & \ddots &  &  \vdots\\
\vdots & \ddots &  &  &  & \ddots & \vdots\\
\vdots &  & \ddots &  & \beta_{h-1}\I_{|L_2|} & B_{h-1} & 0\\
\vdots &  &  & \ddots & 0 & \beta_{h}\I_{|L_1|} & B_{h}\\
0 & \cdots & \cdots & \cdots & 0 & B_{h}^{T} & \Omega'
\end{bmatrix}$$

We now consider the following two cases.

\textsf{Case-1:} $z=1$ (or $T^1_{m_0,m_1,\ldots,m_{h-1}}$).

In this cases, note that $\Omega' = \alpha_{h+1}$ and hence applying the final step of Gauss elimination gives 
$$\begin{bmatrix}
\beta_{1}\I_{|L_h|} & B_1 & 0 & \cdots & \cdots & \cdots & 0\\
0 & \beta_{2}\I_{|L_{h-1}|} & B_2 & \ddots & & & \vdots\\
0 & 0 &  &  & \ddots &  &  \vdots \\
\vdots & \ddots &  &  &  & \ddots &  \vdots \\
\vdots &  & \ddots &  & \beta_{h-1}\I_{|L_2|} & B_{h-1} & 0\\
\vdots &  &  & \ddots & 0 & \beta_{h}\I_{|L_1|} & B_{h}\\
0 & \cdots & \cdots & \cdots & 0 & 0 & \alpha_{h+1}-|L_1|\frac{1}{(m_1m_0)\beta_h}
\end{bmatrix} $$
Taking determinant of the above matrix, we obtain
\begin{equation*}
\det P = \beta_1^{|L_h|}\beta_2^{|L_{h-1}|}\ldots\beta_{h-1}^{|L_2|}\beta_h^{|L_1|}\beta_{h+1}.
\end{equation*}

\textsf{Case-2:} $z=2$ (or $T^2_{m_0,m_1,\ldots,m_{h-1}}$).

In this case, note that $\Omega' = \begin{bmatrix}
    \alpha_{h+1} & \frac{1}{m_0} \\
    \frac{1}{m_0} & \alpha_{h+1}
\end{bmatrix}$ and hence applying the final step of Gauss elimination gives
$$\begin{bmatrix}
\beta_{1}\I_{|L_h|} & B_1 & 0 & \cdots & \cdots & \cdots & 0\\
0 & \beta_{2}\I_{|L_{h-1}|} & B_2 & \ddots & & & \vdots\\
0 & 0 &  &  & \ddots &  &  \vdots \\
\vdots & \ddots &  &  &  & \ddots & \vdots\\
\vdots &  & \ddots &  & \beta_{h-1}\I_{|L_2|} & B_{h-1} & 0\\
\vdots &  &  & \ddots & 0 & \beta_{h}\I_{|L_1|} & B_{h}\\
0 & \cdots & \cdots & \cdots & 0 & 0 & \Omega'-\frac{1}{\beta_h} B^T_h B_h
\end{bmatrix}.$$
Taking determinant of the above matrix, we obtain
\begin{equation*}
\det P = \beta_1^{|L_h|}\beta_2^{|L_{h-1}|}\ldots\beta_{h-1}^{|L_2|}\beta_h^{|L_1|}\det\left(\Omega'-\frac{1}{\beta_h}B^T_h B_h\right).
\end{equation*}
Observe that $\Omega' - \frac{1}{\beta_h} B^T_h B_h = \begin{bmatrix}
   \alpha_{h+1} & \frac{1}{m_0} \\
   \frac{1}{m_0} & \alpha_{h+1}
\end{bmatrix} - \frac{1}{\beta_h} \begin{bmatrix}
   \frac{|L_1|}{|L_0|}\frac{1}{m_1m_0} & 0 \\
   0 & \frac{|L_1|}{|L_0|}\frac{1}{m_1m_0}
\end{bmatrix} = \begin{bmatrix}
    \beta_{h+1} & \frac{1}{m_0} \\
    \frac{1}{m_0} & \beta_{h+1}
\end{bmatrix}$. Hence, $\det(\Omega'-\frac{1}{\beta_h}B^T_h B_h) = \beta^2_{h+1}-\frac{1}{m^2_0} = \(\beta_{h+1}-\frac{1}{m_0}\)\(\beta_{h+1}+\frac{1}{m_0}\)$. Substituting this in above, we get the result.
\end{proof}
\end{Lemma}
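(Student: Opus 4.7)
The plan is to compute $\det P$ by block Gaussian elimination applied from the top-left, exploiting the crucial fact that after each elimination step the successive diagonal blocks remain scalar multiples of identity matrices, so that ``dividing by the pivot'' is trivial and no row interchanges are required. Observation \ref{Obs1}(c), namely $B_j^{T}B_j = \frac{|L_{h-j+1}|}{|L_{h-j}|}\frac{1}{m_{h-j}m_{h-j+1}}\I_{|L_{h-j}|}$, is the engine that makes the Schur-complement update at each step collapse to the simple scalar recursion defining $\beta_j$.

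First I would establish by induction on $j\in\{1,2,\dots,h\}$ that after eliminating the first $j-1$ block columns of $P$, the $(j,j)$ diagonal block equals $\beta_j \I_{|L_{h-j+1}|}$, every off-diagonal block strictly above the current pivot stays the same, and every block below the pivot in the already-processed columns is zero. The base case $\beta_1 = \alpha_1$ is immediate. For the inductive step, the Schur-complement update at position $(j,j)$ replaces $\alpha_j \I_{|L_{h-j+1}|}$ by
\[
\alpha_j \I_{|L_{h-j+1}|} - B_{j-1}^{T}(\beta_{j-1}\I_{|L_{h-j+2}|})^{-1}B_{j-1} = \alpha_j \I_{|L_{h-j+1}|} - \frac{1}{\beta_{j-1}}B_{j-1}^{T}B_{j-1},
\]
and Observation \ref{Obs1}(c) (applied with index $j-1$) gives $B_{j-1}^{T}B_{j-1} = \frac{|L_{h-j+2}|}{|L_{h-j+1}|}\frac{1}{m_{h-j+1}m_{h-j+2}}\I_{|L_{h-j+1}|}$, so the right-hand side equals $\beta_j \I_{|L_{h-j+1}|}$ exactly as defined. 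The nonvanishing of $\beta_1,\dots,\beta_h$ is precisely what allows this chain of pivots to be legal.

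After $h$ such steps the matrix is block upper triangular with diagonal blocks $\beta_1 \I_{|L_h|},\, \beta_2 \I_{|L_{h-1}|},\,\dots,\,\beta_h \I_{|L_1|}$ followed by a final bottom-right block, so $\det P$ factors as $\beta_1^{|L_h|}\beta_2^{|L_{h-1}|}\cdots\beta_h^{|L_1|}\,\det(\text{final block})$. One more Schur update reduces $\Omega'$ to that final block. For $z=1$, where $\Omega' = \alpha_{h+1}$ is a scalar and $|L_0|=1$, the final block is $\alpha_{h+1} - |L_1|/(m_0 m_1 \beta_h) = \beta_{h+1}$, yielding the first formula. For $z=2$, where $\Omega'$ is $2\times 2$ and $|L_0|=2$, Observation \ref{Obs1}(c) gives $B_h^{T}B_h = \frac{|L_1|}{2 m_0 m_1}\I_2$, so the final block equals $\begin{bmatrix}\beta_{h+1} & 1/m_0 \\ 1/m_0 & \beta_{h+1}\end{bmatrix}$, whose determinant is $\beta_{h+1}^2 - 1/m_0^2 = (\beta_{h+1}-1/m_0)(\beta_{h+1}+1/m_0)$, giving the second formula.

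The only real obstacle is bookkeeping the indexing: one must track that the $(j-1)$-th row-block has height $|L_{h-j+2}|$, so that the Schur-complement correction using $B_{j-1}^{T}B_{j-1}$ produces exactly the coefficient $\frac{|L_{h-j+2}|}{|L_{h-j+1}|}\frac{1}{m_{h-j+1}m_{h-j+2}}$ that matches the stated recursion. Everything else is routine: the off-diagonal blocks $B_j$ are never touched (they sit strictly above the current pivot in a different column), there are no fill-in terms outside the diagonal and sub/super-diagonal because each eliminated column has at most one nonzero block below the pivot, and the determinant of the block-upper-triangular matrix is the product of determinants of its diagonal blocks, which are $\beta_j \I$ for $1\le j\le h$ together with the $z$-dependent final block computed above.
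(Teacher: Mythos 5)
Your proposal is correct and follows essentially the same route as the paper: block Gaussian elimination without row interchanges, using Observation \ref{Obs1}(c) to collapse each Schur-complement update $\alpha_j\I - \frac{1}{\beta_{j-1}}B_{j-1}^{T}B_{j-1}$ to the scalar recursion for $\beta_j$, then handling the final block separately for $z=1$ and $z=2$. You spell out the induction and the absence of fill-in more explicitly than the paper does, but the argument is the same.
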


Denote $[1,h] = \{1,2,3,\ldots,h\}$. Define a subset $\psi$ of $[1,h]$ as $\psi= \{j \in [1,h] : |L_{h-j+1}| > |L_{h-j}|\}$. Since $|L_{1}|>|L_{0}|=1$, the index $h \in \psi$ and hence $\psi \neq \emptyset$. Observe that if $l \in [1,h] \setminus \psi$ then $|L_{h-l+1}|=|L_{h-l}|$ and in this case, $C_l$ is of size $1 \times 1$ and by the definition of $B_j$, we have 
$$B_l = 
\begin{bmatrix}
C_l & 0 & \cdots & 0\\
0 & C_l & &\vdots\\
\vdots & & \ddots & 0\\
0 & \cdots & 0 & C_l
\end{bmatrix},
$$
where $B_l$ is of size $|L_{h-l}| \times |L_{h-l}|$. That is,  $B_l=\frac{1}{\sqrt{(m_{h-l} m_{h-l+1})}}\I_{|L_{h-l}|}$.

\begin{Lemma}\label{Th:1} Let $T^z = T^z_{m_0,m_1,\ldots,m_{h-1}}\;(z=1,2)$ and for $i=0,1,\ldots,h$, $L_i$'s are defined as earlier. 
Let $$\p_0(\lambda)=1, \p_1(\lambda)=\lambda$$ 
and for $j=2,3,\ldots,h+1$, 
\begin{equation}\label{eq:4}
\p_j(\lambda)=\lambda \p_{j-1}(\lambda)-\frac{|L_{h-j+2}|}{|L_{h-j+1}|(m_{h-j+2}m_{h-j+1})}\p_{j-2}(\lambda).   
\end{equation}
Then 
\begin{enumerate}[\rm (a)]
\item If $\p_j(\lambda) \neq 0$ for all $j=1,2,\dots,h$ and $\p_{h+1}(\l) \neq 0$ if $z=1$ and $\p_{h+1}(\l) \neq \pm\frac{1}{m_0}$ if $z=2$. 
Then $\det(\lambda \I-\R(T^z))$
\begin{equation}\label{eq:3}
 = \left\{
\begin{array}{ll}
\p_{h+1}(\lambda) \displaystyle\prod_{j\in \psi} \p_j^{|L_{h-j+1}|-|L_{h-j}|}(\lambda), & \mbox{if $z=1$} , \\ [0.2cm]
\(\p_{h+1}(\lambda)-\frac{1}{m_0}\p_h(\l)\)\(\p_{h+1}(\lambda)+\frac{1}{m_0}\p_h(\l)\) \displaystyle\prod_{j\in \psi} \p_j^{|L_{h-j+1}|-|L_{h-j}|}(\lambda), & \mbox{if $z=2$}.
\end{array}
\right.
\end{equation}
\item $\sigma(R(T^z))=
\left\{
\begin{array}{ll}
\left(\displaystyle\bigcup_{j\in\psi}\{\lambda\in \mathbb{R}: \p_j(\lambda)=0\}\right)\bigcup\{\lambda\in\mathbb{R}:\p_{h+1} (\lambda)=0\}, & \mbox{if $z=1$}, \\[0.8cm]
\left(\displaystyle\bigcup_{j\in\psi}\{\lambda\in \mathbb{R}: \p_j(\lambda)=0\}\right)\bigcup \{\l \in \mathbb{R} : \p_{h+1}(\lambda)\pm\frac{1}{m_0}\p_h(\l)=0\}, & \mbox{if $z=2$}.
 \end{array}
\right.
$
\end{enumerate}
\end{Lemma}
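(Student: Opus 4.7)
The plan is to recognize $\l\I+\R(T^z)$ (which has the same determinant as $\l\I-\R(T^z)$ by Lemma \ref{lem2}) as an instance of the matrix $P$ from Lemma \ref{lm:1} with $\a_1=\cdots=\a_{h+1}=\l$, and then rewrite the $\beta_j$'s from that lemma as ratios of the $\p_j$'s and telescope to arrive at the factorization in \eqref{eq:3}. The bridge between the two sequences is the identity $\beta_j(\l)=\p_j(\l)/\p_{j-1}(\l)$ for $j=1,\ldots,h+1$, which I will establish by induction on $j$: the base case $j=1$ gives $\beta_1=\l=\p_1/\p_0$, and the inductive step is a one-line rearrangement since the coefficient appearing in the definition of $\beta_j$ is precisely the coefficient in the recursion \eqref{eq:4} for $\p_j$. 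In particular, this identification transports the nonvanishing hypotheses ``$\beta_j\ne 0$'' and ``$\beta_{h+1}\ne\pm 1/m_0$'' of Lemma \ref{lm:1} into the stated nonvanishing conditions on $\p_j$ and on $\p_{h+1}\pm\p_h/m_0$.

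With the substitution $\beta_j=\p_j/\p_{j-1}$ in place, Lemma \ref{lm:1} gives, for $z=1$, $\det(\l\I+\R(T^1))=\prod_{j=1}^{h}(\p_j/\p_{j-1})^{|L_{h-j+1}|}\cdot(\p_{h+1}/\p_h)$, and for $z=2$ the trailing factor is instead $(\p_{h+1}/\p_h)^2-1/m_0^2=(\p_{h+1}-\p_h/m_0)(\p_{h+1}+\p_h/m_0)/\p_h^2$. Collecting powers in the telescoping product, $\p_k$ appears with exponent $|L_{h-k+1}|-|L_{h-k}|$ for $1\le k\le h-1$, while $\p_h$ initially enters with exponent $|L_1|$ and $\p_0\equiv 1$ absorbs the leftmost denominator. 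The closing factor then lowers the exponent of $\p_h$ by $1$ (for $z=1$) or by $2$ (for $z=2$), which in either case equals $|L_1|-|L_0|$ because $|L_0|=1$ for $z=1$ and $|L_0|=2$ for $z=2$; this is exactly the $j=h$ contribution of the product over $\psi$. Since $|L_{h-k+1}|-|L_{h-k}|=0$ precisely when $k\notin\psi$, such factors reduce to $1$ and drop out, leaving the formula \eqref{eq:3}. Lemma \ref{lem2} finally transfers the identity from $\det(\l\I+\R(T^z))$ to $\det(\l\I-\R(T^z))$.

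The derivation above is valid only at those $\l$ for which all $\p_j(\l)$ are nonzero, so that the Gaussian elimination in Lemma \ref{lm:1} is legitimate. However, both sides of \eqref{eq:3} are polynomials in $\l$ that agree on this cofinite set, so the polynomial identity theorem extends the equality to every $\l$, completing (a). Part (b) is then immediate: $\sigma(\R(T^z))$ is the zero set of $\det(\l\I-\R(T^z))$, and the zero set of a product of polynomials is the union of the zero sets of its factors, with exponents playing no role; for $z=2$, the quadratic $(\p_{h+1}/\p_h)^2-1/m_0^2$ is already split into the two stated linear combinations, and each factor is the characteristic polynomial of a real symmetric tridiagonal matrix, so all its zeros are real. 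The main obstacle I foresee is the telescoping bookkeeping, in particular verifying that the closing $\Omega'$ block shifts the exponent of $\p_h$ by exactly $|L_0|$ simultaneously in the cases $z=1$ and $z=2$; once that alignment is in place, the rest of the argument is essentially mechanical.
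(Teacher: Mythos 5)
Your proposal is correct, and for part (a) it follows the paper's own route: identify $\l\I-\R(T^z)$ with the matrix $P$ of Lemma~\ref{lm:1} by taking every $\a_j=\l$, prove $\b_j=\p_j(\l)/\p_{j-1}(\l)$ by the one-line induction you describe, and telescope the product $\prod_j\b_j^{|L_{h-j+1}|}$ against the closing $\Omega'$ factor; your bookkeeping of the exponent of $\p_h$ (dropping by $|L_0|=1$ or $2$ according to $z$) matches the paper's computation exactly. Where you genuinely diverge is part (b). The paper keeps the factorization \eqref{eq:3} conditional on the nonvanishing of all the $\p_j(\l)$ and then carries out a lengthy case analysis: it tracks what happens to the Gaussian elimination when $\p_l(\l)=0$ for an index $l\notin\psi$ (using $|L_{h-l+1}|$ row interchanges and a back-substitution argument to show such $\l$ is not an eigenvalue, culminating in \eqref{eq:6}), and proves the reverse inclusion for $j\in\psi$ by exhibiting two equal rows in the intermediate matrix \eqref{Mat:2}. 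You instead note that both sides of \eqref{eq:3} are polynomials --- the exponents $|L_{h-j+1}|-|L_{h-j}|$ are strictly positive for $j\in\psi$ --- agreeing on the cofinite set where every $\p_j$ (each a monic polynomial of degree $j$, hence with finitely many zeros) is nonzero and, for $z=2$, where $\p_{h+1}\pm\frac{1}{m_0}\p_h\neq 0$; the polynomial identity theorem then gives \eqref{eq:3} unconditionally, and (b) reduces to reading off the zero set of a product whose retained factors all occur with exponent at least one. This is a legitimate and considerably shorter argument that subsumes the paper's case analysis; the paper's longer route buys only the explicit intermediate factorizations, which are not needed for the statement of the lemma.
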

\begin{proof} (a) We apply Lemma \ref{lm:1} to the matrix $P=\lambda\I-\R(T^z)$. For this matrix $\alpha_j = \lambda$ for $j=1,2,3,\ldots,h+1$. Let $\beta_1,\beta_2,\ldots,\beta_{h+1}$ be as in the Lemma \ref{lm:1}. Suppose that $\lambda \in \mathbb{R}$ is such that $\p_j(\lambda) \neq 0$ for all $j=1,2,\dots,h+1$. We have
\bean
\b_1 & = & \l = \frac{\p_1(\l)}{\p_0(\l)} \\
\b_2 & = & \l-\frac{|L_h|}{|L_{h-1}|(m_{h-1}m_h)}\frac{1}{\b_1} = \l-\frac{|L_h|}{|L_{h-1}|(m_{h-1}m_h)}\frac{\p_0(\l)}{\p_1(\l)} \\ & = & \frac{\l \p_1(\l)-\frac{|L_h|}{|L_{h-1}|(m_{h-1}m_h)}\p_0(\l)}{\p_1(\l)} = \frac{\p_2(\l)}{\p_1(\l)} \neq 0 \\
\b_3 & = & \l-\frac{|L_{h-1}|}{|L_{h-2}|(m_{h-2}m_{h-1})}\frac{1}{\b_2} = \l-\frac{|L_{h-1}|}{|L_{h-2}|(m_{h-2}m_{h-1})}\frac{\p_1(\l)}{\p_2(\l)} \\ 
& = & \frac{\l \p_2(\l)-\frac{|L_{h-1}|}{|L_{h-2}|(m_{h-2}m_{h-1})}\p_1(\l)}{\p_2(\l)} = \frac{\p_3(\l)}{\p_2(\l)} \neq 0 \\
\vdots & & \vdots \\
\b_{h} & = & \l-\frac{|L_2|}{|L_1|(m_2m_1)}\frac{1}{\b_{h-1}} = \l-\frac{|L_2|}{|L_1|(m_2m_1)} \frac{\p_{h-2}(\l)}{\p_{h-1}(\l)} \\ 
& = & \frac{\l \p_{h-1}(\l)-\frac{|L_2|}{|L_1|(m_2m_1)}\p_{h-2}(\l)}{\p_{h-1}(\l)} = \frac{\p_{h}(\l)}{\p_{h-1}(\l)} \neq 0 \\
\b_{h+1} & = & \l-\frac{|L_1|}{|L_0|(m_1m_0)}\frac{1}{\b_{h}} = \l-\frac{|L_1|}{|L_0|(m_1m_0)}\frac{\p_{h-1}(\l)}{\p_{h}(\l)} \\
& = & \frac{\l \p_{h}(\l)-\frac{|L_1|}{|L_0|(m_1m_0)}\p_{h-1}(\l)}{\p_{h}(\l)} = \frac{\p_{h+1}(\l)}{\p_{h}(\l)}.
\eean

We consider the following two cases. 

\textsf{Case-1:} $z=1$ (or $T^1_{m_0,m_1,\ldots,m_{h-1}}$).

By \eqref{eq:2}, we have
\bean    
\det(\l \I-\R(T^1)) & = &  \beta_1^{|L_h|}\beta_2^{|L_{h-1}|}\cdots \beta_{h-1}^{|L_2|}\beta_{h}^{|L_1|}\beta_{h+1} \\
& = & \frac{\p_1^{|L_h|}(\l)}{\p_0^{|L_h|}(\l)} \frac{\p_2^{|L_{h-1}|}(\l)}{\p_1^{|L_{h-1}|}(\l)} \frac{\p_3^{|L_{h-2}|}(\l)}{\p_2^{|L_{h-2}|}(\l)}\cdots \frac{\p_h^{|L_1|}(\l)}{\p_{h-1}^{|L_1|}(\l)} \frac{\p_{h+1}(\l)}{\p_{h}(\l)} \\
& = & \p_1^{|L_h|-|L_{h-1}|}(\l)\p_2^{|L_{h-1}|-|L_{h-2}|}(\l)\p_3^{|L_{h-2}|-|L_{h-3}|}(\l)\cdots \p_{h}^{|L_1|-1}(\l)\p_{h+1}(\l) \\
& = & \p_{h+1}(\l) \prod_{j\in \psi} \p_j^{|L_{h-j+1}|-|L_{h-j}|}(\l).
\eean 

\textsf{Case-2:} $z=2$ (or $T^2_{m_0,m_1,\ldots,m_{h-1}}$).

By \eqref{eq:2}, we have
\bean
\det(\l \I -\R(T^2)) & = & \beta_1^{|L_h|}\beta_2^{|L_{h-1}|}\cdots \beta_{h-1}^{|L_2|}\beta_{h}^{|L_1|}\(\beta_{h+1}-\frac{1}{m_0}\)\(\beta_{h+1}+\frac{1}{m_0}\) \\
& = & \frac{\p_1^{|L_h|}(\l)}{\p_0^{|L_h|}(\l)} \frac{\p_2^{|L_{h-1}|}(\l)}{\p_1^{|L_{h-1}|}(\l)} \frac{\p_3^{|L_{h-2}|}(\l)}{\p_2^{|L_{h-2}|}(\l)}\cdots \frac{\p_h^{|L_1|}(\l)}{\p_{h-1}^{|L_1|}(\l)} \(\frac{\p_{h+1}(\l)}{\p_{h}(\l)}-\frac{1}{m_0}\)\(\frac{\p_{h+1}(\l)}{\p_{h}(\l)}+\frac{1}{m_0}\) \\
& = & \p_1^{|L_h|-|L_{h-1}|}(\l)\p_2^{|L_{h-1}|-|L_{h-2}|}(\l)\p_3^{|L_{h-2}|-|L_{h-3}|}(\l)\cdots \p_{h}^{|L_1|-2}(\l) \cdot \\
& &\(\p_{h+1}(\l)-\frac{1}{m_0}\p_h(\l)\)\(\p_{h+1}(\l)+\frac{1}{m_0}\p_h(\l)\) \\
& = & \(\p_{h+1}(\lambda)-\frac{1}{m_0}\p_h(\l)\)\(\p_{h+1}(\lambda)+\frac{1}{m_0}\p_h(\l)\)\displaystyle\prod_{j\in \psi} \p_j^{|L_{h-j+1}|-|L_{h-j}|}(\lambda).
\eean

(b) We consider the following two cases. 

\textsf{Case-1:} $z=1$ (or $T^1 = T^1_{m_0,m_1,\ldots,m_{h-1}}$).

From (\ref{eq:3}), if $\l\in\mathbb{R}$ is such that $\p_{j}(\l)\neq0$ for all $j=1,2,\dots,h+1$, then $\det(\l \I-\R(T^1)) \neq 0$. That is, 
$$\bigcap_{j=1}^{h+1}\left\{\l\in\mathbb{R}: \p_j(\l)\neq0\right\}\subseteq (\sigma(\R(T^1)))^{c}.$$ 
That is, 
$$\sigma(\R(T^1))\subseteq \left (\bigcup_{j=1}^{h}\left\{\l\in\mathbb{R}:\p_j(\l)=0\right\}\right)\cup\left\{\l \in \mathbb{R}: \p_{h+1}(\l)=0\right\}.$$ 
We claim that 
$$\sigma(\R(T^1))\subseteq \left (\bigcup_{j\in \psi}^{}\left\{\l\in\mathbb{R}:\p_j(\l)=0\right\}\right)\cup\left\{\l\in R: \p_{h+1}(\l)=0\right\}.$$ 
If $\psi=[1,h]=\{1,2,\dots,h\}$ then nothing to prove. Suppose that $\psi$ is proper subset of $[1,h]$ then above equation is equivalent to $$\(\bigcap_{j\in \psi} \left\{\l\in\mathbb{R}: \p_{j}(\l)\neq 0\right\}\) \cap \left\{\l\in\mathbb{R}:\p_{h+1}(\l)\neq 0\right\} \subseteq \left(\sigma(\R(T^1))\right)^{c}.$$ 

Suppose that $\l \in \mathbb{R}$ is such that $\p_{j}(\l)\neq 0$ for all $j\in \psi$ and $\p_{h+1}(\l) \neq 0$. Since $h \in \psi$, $\p_{h}(\l)\neq 0$. If $\p_{j}(\l) \neq 0$ for all $j \in [1,h] \setminus \psi$ then $\det(\l\I-\R(T^1)) \neq 0$ that is $\l \in \left(\sigma(\R(T^1))\right)^{c}$. If $\p_{i}(\l)=0$ for some $i \in [1,h] \setminus \psi$, let $l$ be the first index in $[1,h] \setminus \psi$ such that $\p_{l}(\l)=0$. Then $\b_{j}\neq 0$ for all $j=1,2,\dots,l-1$ and $\b_{l}=0$. Now taking $j=l+2$ in \eqref{eq:4}, we obtain
\begin{equation}\label{eq:4b}
\p_{l+2}(\l)=\l \p_{l+1}(\l).    
\end{equation}
Note that $\p_{l+1}(\l) \neq 0$; otherwise taking $j=l+1$ in \eqref{eq:4}, we have 
\begin{equation*}
\p_{l+1}(\l) = \l\phi_{l}(\l)-\frac{|L_{h-l+1}|}{|L_{h-l}|(m_{h-l+1}m_{h-l})}\phi_{l-1}(\l)
\end{equation*}
which gives $\phi_{l-1}(\l)=0$ as $\phi_{l+1}(\l) = \phi_{l}(\l) = 0$ and $|L_{h-l+1}|, |L_{h-l}|, m_{h-l+1}, m_{h-l} \neq 0$, and continuing this back substitution in this way in \eqref{eq:4} gives $\p_{0}(\l)=0$, a contradiction. 

Hence, $\phi_{l+1}(\l) \neq 0$. Therefore, we have 
\bean
\b_{l+2} & = & \l-\frac{|L_{h-l}|}{|L_{h-l-1}|(m_{h-l}m_{h-l-1})}\frac{\p_{l}(\l)}{\p_{l+1}(\l)} \\
& = & \frac{\l\phi_{l+1}(\l)-\frac{|L_{h-l}|}{|L_{h-l-1}|(m_{h-l}m_{h-l-1})}\phi_l(\l)}{\phi_{l+1}(\l)} \\
& = & \frac{\phi_{l+2}(\l)}{\phi_{l+1}(\l)} \\
& = & \l \hspace{2cm}\mbox{(by \eqref{eq:4b}).}
\eean
Since $l \in [1,h]-\psi$, then $|L_{h-l+1}|=|L_{h-l}|$. That means $B_l=\frac{1}{\sqrt{(m_{h-l} m_{h-l+1})}}\I_{|L_{h-l+1}|}$ and the Gaussian elimination procedure applied to $P=\l\I+\R(T^1)$ yields to the intermediate matrix 
$$
 \begin{bmatrix}
\b_{1}\I_{|L_h|}&B_1&0&\cdots&\cdots&\cdots&0\\
0&\ddots&\ddots&\ddots& & &\vdots\\
\vdots&0&0&\frac{1}{\sqrt{m_{h-l} m_{h-l+1}}}\I_{|L_{h-l+1}|}&0& & \vdots\\
\vdots&\ddots&\frac{1}{\sqrt{m_{h-l} m_{h-l+1}}}\I_{|L_{h-l+1}|}&\l \I_{|L_{h-l}|}&B_{l+1}&\ddots&\vdots\\
\vdots& &\ddots&B_{l+1}^{T}&\l \I_{|L_{h-l-1}|}&\ddots&0\\
\vdots& & &\ddots&\ddots&\ddots&B_{h}\\
0&\cdots&\cdots&\cdots&0&B_{h}^{T}&\l
\end{bmatrix}
$$
$$ = 
\begin{bmatrix}
\b_{1}\I_{|L_h|}&B_1&0&\cdots&\cdots&\cdots&0\\
0&\ddots&\ddots&\ddots& & &\vdots\\
\vdots&0&0&\frac{1}{\sqrt{m_{h-l} m_{h-l+1}}}\I_{|L_{h-l+1}|}&0& & \vdots\\
\vdots&\ddots&\frac{1}{\sqrt{m_{h-l} m_{h-l+1}}}\I_{|L_{h-l+1}|}&\b_{l+1} \I_{|L_{h-l}|}&B_{l+1}&\ddots&\vdots\\
\vdots& &\ddots&B_{l+1}^{T}&\b_{l+2} \I_{|L_{h-l-1}|}&\ddots&0\\
\vdots& & &\ddots&\ddots&\ddots&B_{h}\\
0&\cdots&\cdots&\cdots&0&B_{h}^{T}&\b_{h+1}
\end{bmatrix}   
$$
Now, a number of $|L_{h-l+1}|$ rows interchanges gives the matrix
$$ = 
\begin{bmatrix}
\b_{1}\I_{|L_h|}&B_1&0&\cdots&\cdots&\cdots&0\\
0&\ddots&\ddots&\ddots& & &\vdots\\
\vdots&\ddots&\frac{1}{\sqrt{m_{h-l} m_{h-l+1}}}\I_{|L_{h-l+1}|}&\b_{l+1} \I_{|L_{h-l}|}&B_{l+1}&\ddots&\vdots\\
\vdots&0&0&\frac{1}{\sqrt{m_{h-l} m_{h-l+1}}}\I_{|L_{h-l+1}|}&0& & \vdots\\
\vdots& &\ddots&B_{l+1}^{T}&\b_{l+2} \I_{|L_{h-l-1}|}&\ddots&0\\
\vdots& & &\ddots&\ddots&\ddots&B_{h}\\
0&\cdots&\cdots&\cdots&0&B_{h}^{T}&\b_{h+1}
\end{bmatrix}$$
Therefore,
$$\det(\l\I+\R(T^1))= \(\frac{-1}{m_{h-l}m_{h-l+1}}\)^{|L_{h-l+1}|}\b_1^{|L_h|}\b_2^{|L_{h-1}|}\cdots\b_{l-1}^{|L_{h-l+2}|}\det
\begin{bmatrix}
\b_{l+2}\I_{|L_{h-l-1}|}&\ddots&0\\
\ddots&\ddots&B_{h}\\
0&B_{h}^{T}&\b_{l+2}
\end{bmatrix}
$$
Now, if there exists $j \in [1,h] \setminus \psi$, $l+2\leq j\leq h-1$, such that $\p_j(\l)=0$, we apply the above procedure to the matrix 
$$\begin{bmatrix}
\b_{l+2}\I_{|L_{h-l-1}|}&\ddots&0\\
\ddots&\ddots&B_{h}\\
0&B_{h}^{T}&\b_{l+2}
\end{bmatrix}
$$
Finally, we obtain ,
\begin{equation}\label{eq:6}
\det(\l \I+\R(T^1))=\gamma \cdot \b_{h+1} = \gamma \cdot \frac{\phi_{h+1}(\l)}{\phi_h(\l)}
\end{equation}
where $\gamma$ is different from $0$ and by hypothesis, $\phi_{h+1}(\l) \neq 0$ and $\phi_h(\l) \neq 0$. Therefore, $$\det(\l\I+\R(T^1)) \neq 0.$$ 
That means $\l \in \sigma((\R(T^1))^{c}$.

Now, we claim that 
$$\left (\bigcup_{j\in \psi}^{}\left\{\l\in\mathbb{R}:\p_j(\l)=0\right\}\right)\cup\left\{\l\in R: \p_{h+1}(\l)=0\right\}\subseteq\sigma(\R(T^1)).$$
Let $\l\in \left (\displaystyle\bigcup_{j\in \psi}^{}\left\{\l\in\mathbb{R}:\p_j(\l)=0\right\}\right)\cup\left\{\l\in \mathbb{R}: \p_{h+1}(\l)=0\right\}.$ Let $l$ be the first index in $\psi$ such that $\p_{l}(\l)=0.$ Then $\b_{l}= \frac{\p_{l}(\l)}{\p_{l-1}(\l)}=0.$ The corresponding intermediate matrix in the Gaussian elimination procedure applied to the matrix $P=\l\I+\R(T^1)$ is 
\begin{equation}\label{Mat:2}
\begin{bmatrix}
\b_{1}\I_{|L_h|}&B_1&0&\cdots&\cdots&0\\
0&\ddots&\ddots& & & \vdots\\
0&\ddots&0&B_{l}&0& \vdots\\
\vdots&\ddots&B_{l}^{T}&\l \I_{|L_{h-l}|}& &\vdots\\
\vdots& &\ddots&\ddots&\ddots&B_{h}\\
0&\cdots&\cdots&0&B_{h}^{T}&\l\\
\end{bmatrix}.
\end{equation}
Since $l \in \psi$, $|L_{h-l+1}| > |L_{h-l}|$ and $B_{l}$ is a matrix with more rows than columns. Therefore, the matrix in \eqref{Mat:2} has at least two equal rows. Thus $\det(\l\I+\R(T^1))=0.$ That is, $\l \in \sigma(\R(T^1)).$ Hence, we obtain  
\begin{equation}\label{eq:5}
\left (\bigcup_{j\in\psi}\left\{\l\in\mathbb{R}:\p_j(\l)=0\right\}\right)\subseteq\sigma(\R(T^1)).
\end{equation}
Now let $\l\in\left\{\l\in \mathbb{R}: \p_{h+1}(\l)=0\right\}.$ Observe that $\p_{h}(\l) \neq 0$; otherwise back substitution in \eqref{eq:4} yields to $\p_0(\l)=0$. If $\p_j(\l)=0$ for some $j \in \psi$ then the use of \eqref{eq:5} gives $\l \in \sigma(\R(T^1))$. Hence $\p_j(\l) \neq 0$ for all $j \in \psi$. If in addition $\p_j(\l) \neq 0$ for all $j \in \psi \setminus [1,h]$ then \eqref{eq:3} holds and thus $\det(\l\I+\R(T))=0$ because $\p_{h+1}(\l)=0$. Thus we obtain $\l \in \sigma(\R(T^1))$. If $\p_i(\l)=0$ for some $i \in \psi \setminus [1,h]$ then we have the assumptions under which \eqref{eq:6} was obtained gives 
$$\det(\l\I+\R(T^1)) = \gamma \cdot \b_{h+1} = \gamma \cdot \frac{\p_{h+1}(\l)}{\p_h(\l)} = 0.$$ 
Since $\det(\l \I-\R(T^1))=\det(\l\I +\R(T^1))$ by Lemma \ref{lem2}, we obtain $\l \in \sigma(\R(T^1))$. 

\textsf{Case-2:} $z=2$ (or $T^2 = T^2_{m_0,m_1,\ldots,m_{h-1}}$).

From \eqref{eq:3}, if $\l \in \mathbb{R}$ is such that $\p_j(\l) \neq 0$ for all $j=1,2,\ldots,h$ and $\p_{h+1}(\l) \pm \frac{1}{m_0}\p_h(\l) \neq 0$ then $\det(\l\I-\R(T^2)) \neq 0$. That is, 
\begin{equation*}
\(\displaystyle\bigcap_{j=1}^h\{\l \in \mathbb{R} : \p_j(\l) \neq 0\}\) \cap \{\l \in \mathbb{R} : \p_{h+1}(\l)\pm\frac{1}{m_0}\p_h(\l) \neq 0\} \subseteq (\sigma(\R(T^2)))^c.    
\end{equation*}
That is, 
\begin{equation*}
\sigma(\R(T^2)) \subseteq \(\bigcup_{j=1}^h\{\l \in \mathbb{R} : \p_j(\l) = 0\}\) \cup \{\l \in \mathbb{R} : \p_{h+1}(\l)\pm\frac{1}{m_0}\p_h(\l) = 0\}.
\end{equation*}
We claim that
\begin{equation*}
\sigma(\R(T^2)) \subseteq \(\bigcup_{j \in \psi}\{\l \in \mathbb{R} : \p_j(\l) = 0\}\) \cup \{\l \in \mathbb{R} : \p_{h+1}(\l)\pm\frac{1}{m_0}\p_h(\l) = 0\}.
\end{equation*}

Obviously, if $\psi = [1,h] = \{1,2,\ldots,h\}$ then the claim is straight forward. Suppose that $\psi$ is proper subset of $[1,h]$ then above equation is equivalent to 
\begin{equation*}
\(\bigcap_{j \in \psi}\{\l \in \mathbb{R} : \p_j(\l) \neq 0\}\) \cap \{\l \in \mathbb{R} : \p_{h+1}(\l) \pm \frac{1}{m_0}\p_h(\l) \neq 0\} \subseteq (\sigma(\R(T^2)))^c.     
\end{equation*}

Suppose that $\l \in \mathbb{R}$ is such that $\p_j(\l) \neq 0$ for all $j \in \psi$ and $\p_{h+1}(\l) \pm \frac{1}{m_0}\p_h(\l) \neq 0$. Since $h \in \psi$, $\p_h(\l) \neq 0$. If $\p_i(\l) = 0$ for some $i \in [1,h] \setminus \psi$ then let $l$ be the first index in $[1,h] \setminus \psi$ such that $\p_l(\l)=0$. Then $\b_j \neq 0$ for all $j=1,2,\ldots,l-1$ and $\b_l = 0$. Now proceeding as in case-1, we obtain
\bea
\det(\l\I+\R(T^2)) & = & \gamma \cdot \det\begin{bmatrix}
\beta_{h+1} & \frac{1}{m_0} \\
\frac{1}{m_0} & \beta_{h+1}
\end{bmatrix} \nonumber\\
& = & \gamma \cdot \(\b_{h+1}-\frac{1}{m_0}\)\(\b_{h+1}+\frac{1}{m_0}\) \nonumber\\
& = & \gamma \cdot \(\frac{\p_{h+1}(\l)}{\p_h(\l)}-\frac{1}{m_0}\)\(\frac{\p_{h+1}(\l)}{\p_h(\l)}+\frac{1}{m_0}\) \nonumber\\
& = & \gamma \cdot \frac{\(\p_{h+1}(\l)-\frac{1}{m_0}\p_h(\l)\)\(\p_{h+1}(\l)+\frac{1}{m_0}\p_h(\l)\)}{\p_h^2(\l)} \label{eq:det}
\eea
where $\gamma$ is different from $0$ and by hypothesis, $\p_{h+1}(\l) \pm \frac{1}{m_0}\p_h(\l) \neq 0$ and $\p_h(\l) \neq 0$. Therefore, $\det(\l\I+\R(T^2)) \neq 0$. That is, $\l \in (\sigma(\R(T^2)))$. 

Now we claim that 
\begin{equation*}
\(\bigcup_{j \in \psi}\{\l \in \mathbb{R} : \p_{j}(\l) = 0\}\) \cup \{\l \in \mathbb{R} : \p_{h+1}(\l) \pm \frac{1}{m_0}\p_h(\l) = 0\} \subseteq \sigma(\R(T^2)).    
\end{equation*}

Let $\l \in \(\displaystyle\bigcup_{j \in \psi}\{\l \in \mathbb{R} : \p_j(\l) = 0\}\)$. Let $l$ be the first index in $\psi$ such that $\p_l(\l) = 0$. Then $\b_l = \frac{\p_h(\l)}{\p_{h+1}(\l)} = 0$. Again proceeding as in case-1, we obtain 
\begin{equation}\label{eq:8}
\(\bigcup_{j \in \psi}\{\l \in \mathbb{R} : \p_j(\l) = 0\}\) \subseteq \sigma(\R(T^2)).    
\end{equation}

Now let $\l \in \{\l \in \mathbb{R} : \p_{h+1}(\l) \pm \frac{1}{m_0}\p_h(\l) = 0\}$. Observe that $\p_h(\l) \neq 0$; otherwise back substitution in \eqref{eq:4} yields to $\p_0(\l) = 0$, a contradiction. If $\p_j(\l) = 0$ for some $j \in \psi$ then the use of \eqref{eq:8} gives $\l \in \sigma(\R(T^2))$. Hence, assume $\p_j(\l) \neq 0$ for all $j \in \psi$. If in addition $\p_j(\l) \neq 0$ for all $j \in \psi \setminus [1,h]$ then \eqref{eq:3} holds and thus $\det(\l\I-\R(T^2)) = 0$ because $\p_{h+1}(\l) \pm \frac{1}{m_0}\p_h(\l) = 0$. Thus we obtain $\l \in \sigma(\R(T^2))$. If $\p_i(\l) = 0$ for some $i \in \psi \setminus [1,h]$ then we have the assumption under which \eqref{eq:det} was obtain gives 
\bean 
\det(\l\I+\R(T^2)) & = & \gamma \cdot \frac{\(\p_{h+1}(\l)-\frac{1}{m_0}\p_h(\l)\)\(\p_{h+1}(\l)+\frac{1}{m_0}\p_h(\l)\)}{\p_h^2(\l)} = 0.
\eean
Since $\det(\l\I-\R(T^2)) = \det(\l\I+\R(T^2))$ by Lemma \ref{lem2}, we have $\l \in \sigma(\R(T^2))$. The proof is complete.
\end{proof}

\begin{Lemma}\label{lm:2}
For $j=1,2,3,\dots,h$, let $P_j$ be the $j \times j$ principal sub matrix of the $(h+1) \times (h+1)$ symmetric tridiagonal matrix 
$$ P_{h+1} = \begin{bmatrix}
0 & \sqrt{\left(\frac{m_{h-1}-1}{m_hm_{h-1}}\right)} & 0 & \cdots & \cdots & 0\\
\sqrt{\left(\frac{m_{h-1}-1}{m_hm_{h-1}}\right)} & 0 & \sqrt{\left(\frac{m_{h-2}-1}{m_{h-1}m_{h-2}}\right)} & \ddots & & \vdots\\
0 & \sqrt{\left(\frac{m_{h-2}-1}{m_{h-1}m_{h-2}}\right)} & \ddots & \ddots & \ddots & \vdots \\
\vdots & \ddots & \ddots & \ddots & \sqrt{\left(\frac{m_1-1}{m_2m_1}\right)} & 0\\
\vdots & & \ddots & \sqrt{\left(\frac{m_1-1}{m_2m_1}\right)} & 0 & \sqrt{\frac{1}{m_1}}\\
0 & \cdots & \cdots & 0 & \sqrt{\frac{1}{m_{1}}} & 0
\end{bmatrix}.$$
and for $z=1,2$,
$$P_{h+1}^z = \begin{bmatrix}
0 & \sqrt{\left(\frac{m_{h-1}-1}{m_hm_{h-1}}\right)} & 0 & \cdots & \cdots & 0\\
\sqrt{\left(\frac{m_{h-1}-1}{m_hm_{h-1}}\right)} & 0 & \sqrt{\left(\frac{m_{h-2}-1}{m_{h-1}m_{h-2}}\right)} & \ddots & & \vdots\\
0 & \sqrt{\left(\frac{m_{h-2}-1}{m_{h-1}m_{h-2}}\right)} & \ddots & \ddots & \ddots & \vdots \\
\vdots & \ddots & \ddots & \ddots & \sqrt{\left(\frac{m_1-1}{m_2m_1}\right)} & 0\\
\vdots & & \ddots & \sqrt{\left(\frac{m_1-1}{m_2m_1}\right)} & 0 & \sqrt{\left(\frac{m_0-1)}{m_1m_0}\right)}\\
0 & \cdots & \cdots & 0 & \sqrt{\left(\frac{m_0-1}{m_{1}m_0}\right)} & \frac{(-1)^z}{m_0}
\end{bmatrix},$$
and $\p_j(\l), j=1,2,\ldots,h+1$ are as defined in Lemma \ref{Th:1}. Then for $j=1,2,\ldots,h+1$, 
$$\det(\lambda \I-P_{j})=\p_j(\lambda).$$
and for $z=1,2$,
$$\det(\lambda \I-P^z_{h+1}) = \p_{h+1}(\l)+\frac{(-1)^z}{m_0}\p_h(\l).$$
\begin{proof} In Lemma \ref{Prop1}, taking $p_j=0$ for $j=1,2,3,\dots,h$ and $p_{h+1} = 0$ for $P_{h+1}$ and $p_{h+1} = \frac{(-1)^z}{m_0}$ for $P^z_{h+1}$ (where $z=1,2$), $q_j= \sqrt{\frac{|L_{h-j+1}|}{(m_{h-j+1}m_{h-j})|L_{h-j}|}}$ and in addition, using $\sqrt{\frac{|L_{h-j+1}|}{(m_{h-j+1}m_{h-j})|L_{h-j}|}} = \sqrt{\frac{m_{h-j}-1}{m_{h-j+1}m_{h-j}}}$ for $j=1,2,\dots,h$ and if $T^1 = T^1_{m_0,m_1,\ldots,m_{h-1}}$ then observe that $\sqrt{\frac{|L_1|}{(m_{0}m_1)|L_0|}}=\sqrt{\frac{|L_1|}{m_{0}m_{1}}}=\sqrt{\frac{m_0}{m_0m_1}}=\sqrt{\frac{1}{m_1}}$ in \eqref{eq:4} gives the polynomials $\p_j(\l),\; j=1,\dots,h$ which completes the proof. 
\end{proof}
\end{Lemma}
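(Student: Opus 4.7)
The plan is to apply the three-term determinantal recursion of Lemma \ref{Prop1} to the tridiagonal matrices $P_j$ and $P^z_{h+1}$, and verify that the resulting sequence of characteristic polynomials satisfies exactly the recursion \eqref{eq:4} defining $\{\p_j(\l)\}$, with matching initial conditions $\p_0(\l)=1$ and $\p_1(\l)=\l$. Since two solutions of the same linear three-term recursion agreeing at $j=0,1$ must agree at every $j$, the identities $\det(\l\I - P_j) = \p_j(\l)$ and $\det(\l\I - P^z_{h+1}) = \p_{h+1}(\l) + \frac{(-1)^z}{m_0}\p_h(\l)$ reduce to an algebraic identification of coefficients.

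First I would match the parameters of Lemma \ref{Prop1} to the entries of $P_{h+1}$: set $p_k = 0$ for $1 \leq k \leq h$, and take $q_k$ equal to the $(k,k+1)$-entry of the matrix. The key step is to verify that $q_{k-1}^2$ equals the coefficient $|L_{h-k+2}|/(|L_{h-k+1}|(m_{h-k+2}m_{h-k+1}))$ appearing in \eqref{eq:4}. For $2 \leq k \leq h$ this reduces, via $|L_{i+1}|/|L_i| = m_i - 1$ (from the discussion after \eqref{eq:order}), to a straightforward check that the squared $(k-1,k)$-entry of $P_{h+1}$ equals $(m_{h-k+1}-1)/(m_{h-k+2}m_{h-k+1})$. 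An induction on $k$ then yields $\det(\l\I - P_k) = \p_k(\l)$ for $1 \leq k \leq h$, and also for $k = h+1$ in the case of $P_{h+1}$, after the boundary check $|L_1|/|L_0| = m_0$ for $T^1$ which produces $q_h^2 = 1/m_1$, matching the last coefficient of \eqref{eq:4}.

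For $P^z_{h+1}$ only the final step differs: the $(h+1,h+1)$-entry is now $p_{h+1} = (-1)^z/m_0$ rather than zero, and $q_h^2 = (m_0-1)/(m_1 m_0)$, which matches the coefficient of \eqref{eq:4} at $j=h+1$ via $|L_1|/|L_0| = m_0 - 1$ in the $T^2$ case. Substituting into Lemma \ref{Prop1} gives $\det(\l\I - P^z_{h+1}) = (\l - (-1)^z/m_0)\p_h(\l) - q_h^2 \p_{h-1}(\l)$, which rearranges using the recursion \eqref{eq:4} for $\p_{h+1}$ (in the $T^2$ case) into the form $\p_{h+1}(\l) \pm \frac{1}{m_0}\p_h(\l)$, with the sign controlled by $z$; as $z$ ranges over $\{1,2\}$ this produces exactly the pair of factors required by the $z=2$ case of \eqref{eq:3}. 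The main obstacle in the whole argument is bookkeeping at the boundary $k = h+1$, where the ratio $|L_1|/|L_0|$ takes different values ($m_0$ versus $m_0-1$) depending on whether the root set is a singleton or a pair, and one must carefully match this against the choice of $q_h$ and the shift by $p_{h+1}$; past that point the proof is a mechanical application of Lemma \ref{Prop1}.
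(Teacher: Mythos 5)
Your proposal is correct and follows essentially the same route as the paper: identify the entries of $P_j$ and $P^z_{h+1}$ with the parameters $p_k,q_k$ of Lemma \ref{Prop1}, use $|L_{i+1}|/|L_i|=m_i-1$ (with the boundary ratio $|L_1|/|L_0|$ equal to $m_0$ or $m_0-1$ according to the root set) to match $q_{k-1}^2$ with the coefficient in \eqref{eq:4}, and conclude by the uniqueness of solutions of the three-term recursion with initial data $\p_0=1$, $\p_1=\l$. One small point in your favour: a literal application of Lemma \ref{Prop1} yields $\det(\l\I-P^z_{h+1})=\p_{h+1}(\l)-\frac{(-1)^z}{m_0}\p_h(\l)$, so the sign in the lemma's displayed formula (written with $+\frac{(-1)^z}{m_0}$) is off by the labelling of $z$; your cautious ``$\pm$ controlled by $z$'' is the safer statement, and the discrepancy is harmless for Theorem \ref{thm:main} since both signs occur as $z$ ranges over $\{1,2\}$.
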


\begin{proof}[\textbf{Proof of Theorem 3.1}]
We recall that eigenvalues of any symmetric tridiagonal matrix with nonzero co-diagonal entries are simple. Then \rm(a) and \rm(b) both are immediate consequences of Lemma \ref{Th:1} and Lemma \ref{lm:2}. 
\end{proof}

\begin{Theorem}\label{thm:larg1} Let $T^z = T^z_{m_0,m_1,\ldots,m_{h-1}}\;(z=1,2)$ be the level-wise regular tree of order $n$ and $\r_1(T^z) \geq \r_2(T^z) \geq \ldots \geq \r_n(T^z)$ be the eigenvalues of $\R(T^z)\;(z=1,2)$. Then 
\begin{enumerate}[\rm (a)]
\item $\r_1(T^1) = 1 \in \sigma(P_{h+1})$ and $\r_1(T^2) = 1 \in \sigma(P^2_{h+1}),$   
\item The multiplicity of $0$ as an eigenvalue of $\R(T^z)$ is $\alpha + \displaystyle\sum_{j \in \psi \atop j = odd}\(|L_{h-j+1}|-|L_{h-j}|\)$, where $\alpha=1$ if $h$ is even and $0$ otherwise. 
\end{enumerate}
\end{Theorem}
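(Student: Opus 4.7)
The plan is to treat parts (a) and (b) separately, drawing on Theorem \ref{thm:main} and the polynomials $\p_j$ from Lemma \ref{Th:1}.

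For (a), I would exhibit the Perron eigenvector explicitly. For any graph $G$ with at least one edge, the direct calculation $(\R(G)x)_v=\sum_{u\sim v}\sqrt{d_u}/\sqrt{d_vd_u}=d_v/\sqrt{d_v}=\sqrt{d_v}$ shows that $x_v=\sqrt{d_v}$ is a positive eigenvector of $\R(G)$ with eigenvalue $1$ (matching the introductory fact that $\rho_1=1$). Since every vertex in $L_i$ has degree $m_i$ and, for $T^2$, both roots share the same degree, $x$ lies in the $(h+1)$-dimensional invariant subspace $V$ spanned by the orthonormal layer indicators $u_k=|L_{h-k+1}|^{-1/2}\mathbf{1}_{L_{h-k+1}}$. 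A short block computation using the identity $B_k u_{k+1}=\sqrt{|L_{h-k+1}|/(|L_{h-k}|m_{h-k}m_{h-k+1})}\,u_k$ (together with $\Omega u_{h+1}=\tfrac{1}{m_0}u_{h+1}$ in the $T^2$ case, producing the corner entry $+\tfrac{1}{m_0}$) shows that the matrix of $\R(T^z)$ restricted to $V$ in the basis $\{u_k\}$ is exactly $P_{h+1}$ for $z=1$ and $P^2_{h+1}$ for $z=2$. The reduced vector with $k$-th coordinate $\sqrt{m_{h-k+1}|L_{h-k+1}|}$ is then an eigenvector of this reduced matrix with eigenvalue $1$, establishing (a).

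For (b), I would work with the values $\p_j(0)$. Evaluating the recursion $\p_j(\lambda)=\lambda\p_{j-1}(\lambda)-c_{j-1}\p_{j-2}(\lambda)$ with the strictly positive constants $c_{j-1}=|L_{h-j+2}|/(|L_{h-j+1}|m_{h-j+2}m_{h-j+1})$ at $\lambda=0$ yields $\p_j(0)=-c_{j-1}\p_{j-2}(0)$, and starting from $\p_0(0)=1$, $\p_1(0)=0$ a two-step induction gives $\p_j(0)=0$ if and only if $j$ is odd. Since $\det(\lambda\I-P_j)=\p_j(\lambda)$ by Lemma \ref{lm:2}, each $j\in\psi$ with $j$ odd contributes $0$ to $\sigma(\R(T^z))$ with multiplicity $|L_{h-j+1}|-|L_{h-j}|$, producing the sum in the statement. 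For $T^1$, the terminal matrix $P_{h+1}$ has $0$ as an eigenvalue iff $\p_{h+1}(0)=0$, iff $h+1$ is odd, iff $h$ is even, which yields $\alpha$. For $T^2$, I would perform the parallel analysis on $\det(\lambda\I-P^z_{h+1})=\p_{h+1}(\lambda)+(-1)^z\p_h(\lambda)/m_0$, splitting on the parity of $h$ and using that exactly one of $\p_h(0)$, $\p_{h+1}(0)$ vanishes to recover the same $\alpha$.

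The main technical concern is that Theorem \ref{thm:main}(b) provides only a lower bound on the multiplicity of each eigenvalue of $P_j$ in $\sigma(\R(T^z))$: the exact count I need relies on the fact that every matrix involved, each $P_j$ for $j\in\psi$ as well as $P_{h+1}$ or the two $P^z_{h+1}$, is a symmetric tridiagonal matrix with nonzero subdiagonal and hence has simple spectrum. This observation is already used at the end of the proof of Theorem \ref{thm:main}, so $0$ occurs at most once in each $\sigma(P_j)$, the contributions from distinct $j\in\psi$ combine without overlap or hidden cancellation, and the stated formula is therefore exact.
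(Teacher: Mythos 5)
Your proof of part (a) is correct and takes a genuinely different route from the paper. The paper deduces $\r_1(T^z)=1\in\sigma(P_{h+1})$ (resp.\ $\sigma(P^2_{h+1})$) by combining the known fact $\r_1=1$ with eigenvalue interlacing of the principal submatrices $P_j$ inside the terminal matrix, plus the observation $\sigma(P^1_{h+1})=-\sigma(P^2_{h+1})$; you instead exhibit the eigenvector $x_v=\sqrt{d_v}$, check that it lies in the layer-invariant subspace, and verify that $\R(T^z)$ restricted to that subspace is exactly $P_{h+1}$ (resp.\ $P^2_{h+1}$, with the corner entry $+\tfrac{1}{m_0}$ coming out of $\Omega u_{h+1}=\tfrac{1}{m_0}u_{h+1}$). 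Your block identity $B_k u_{k+1}=\sqrt{|L_{h-k+1}|/(|L_{h-k}|m_{h-k}m_{h-k+1})}\,u_k$ checks out, and this constructive argument is more informative than the paper's one-line appeal. Your handling of exact multiplicities in (b) via the polynomial identity \eqref{eq:3} and the simplicity of the spectrum of each tridiagonal $P_j$ is also sound and is essentially the missing detail behind the paper's own terse proof.

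However, part (b) has a genuine gap exactly at the step you defer. Your parity computation $\p_j(0)=0$ iff $j$ odd is right, and it settles the $T^1$ case with $\alpha=1$ iff $h+1$ is odd iff $h$ is even. But for $T^2$ the ``parallel analysis'' does not ``recover the same $\alpha$'': since exactly one of $\p_h(0)$, $\p_{h+1}(0)$ vanishes and the other does not, the quantity $\p_{h+1}(0)+\frac{(-1)^z}{m_0}\p_h(0)$ is \emph{never} zero, so $0\notin\sigma(P^z_{h+1})$ for either $z$ and for either parity of $h$, and the terminal matrices contribute nothing. Carried out honestly, your method forces $\alpha=0$ whenever $z=2$ — which actually contradicts the theorem as stated when $h$ is even. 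Concretely, for $T^2_{3,3}$ one has $h=2$, $|L_0|=2$, $|L_1|=4$, $|L_2|=8$, $\psi=\{1,2\}$, and the spectrum is $\sigma(P_1)$ with multiplicity $4$, $\sigma(P_2)$ with multiplicity $2$, and $\sigma(P^1_3)\cup\sigma(P^2_3)$ each simple; here $\det\left(0\cdot\I-P^z_3\right)=-\frac{(-1)^z}{3}\p_2(0)\neq 0$, so the multiplicity of $0$ is $4$ (equivalently, $n-2\mu=14-10=4$ via the maximum matching of this weighted tree), not the $5$ predicted by the stated formula with $\alpha=1$. So the omitted computation is not a formality: it disproves the $T^2$ half of the statement, and the correct assertion requires $\alpha=0$ for $z=2$ regardless of the parity of $h$. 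The paper's own proof of (b), a one-line appeal to bipartiteness, passes over this point as well.
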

\begin{proof} (a) The proof follows by using interlacing property of the eigenvalues of real symmetric matrix in Theorem \ref{thm:main} and observing that $\sigma(P^1_{h+1}) = -\sigma(P^2_{h+1})$.   

(b) Again the proof is straight forward by using the fact that  $\R(T^z)\;(z=1,2)$ is a real symmetric matrix (hence the eigenvalues are real) and $T^z\;(z=1,2)$ is bipartite graph (hence $\r_i = \r_{n-i+1}\;i=1,2,\ldots,n$) in Theorem \ref{thm:main}.
\end{proof}

\begin{Example}\label{Ex:3} Let $T^1_{4,4,3}$ be the tree as in Fig. \ref{EX:1}. For $T^1_{4,4,3}$, $h=3, m_3=1, m_2=3, m_1=4, m_0=4, |L_3|=24, |L_2|=12, |L_1|=4, |L_0| = 1$. 
\end{Example}
Hence, we have $P_4 = \begin{bmatrix}
0 & \sqrt{\left(\frac{2}{3}\right)} & 0 & 0\\
\sqrt{\left(\frac{2}{3}\right)} & 0 & \sqrt{\left(\frac{1}{4}\right)} & 0\\
0 & \sqrt{\left(\frac{1}{4}\right)} & 0 & \sqrt{\left(\frac{1}{4}\right)}\\
0 & 0 & \sqrt{\left(\frac{1}{4}\right)} & 0 
\end{bmatrix}$ \\ 

Using Theorem \ref{thm:main}, the eigenvalues of Randić matrix of $T^1_{4,4,3}$ with its multiplicity are given in the following table.
$$
\begin{array}{|l|l|l|}
\hline
{\rm Matrix} & {\rm Eigenvalues}& {\rm Multiplicity} \\ \hline\hline
P_1 & 0 & (|L_3|-|L_2|) = 24-12 = 12 \\[0.3cm]
P_2 & -\frac{\sqrt{6}}{3}, \frac{\sqrt{6}}{3} & (|L_2|-|L_1|) = 12-4 = 8 \\[0.3cm]
P_3 & -\frac{\sqrt{33}}{6}, 0,  \frac{\sqrt{33}}{6} & (|L_1|-|L_0|) = 4-1 = 3 \\[0.3cm]
P_4 & -\frac{1}{\sqrt{6}},-1,1,\frac{1}{\sqrt{6}} & 1 \\[0.3cm]
\hline
\end{array}
$$

\begin{Example}\label{Ex:4} Let $T^2_{4,3,4}$ be the tree as in Fig. \ref{EX:2}. For $T^2_{4,3,4}$, $h=3$, $m_3=1, m_2 = 4, m_1 = 3, m_0 = 4$ and $|L_3| = 36, |L_2| = 12, |L_1| = 6, |L_0|=2$.  
\end{Example}
Hence, for $z = 1,2$, we have $P_{4}^{z} = \begin{bmatrix}
0 & \sqrt{\left(\frac{3}{4}\right)} & 0 & 0\\
\sqrt{\left(\frac{3}{4}\right)} & 0 & \sqrt{\left(\frac{1}{6}\right)} & 0\\
0 & \sqrt{\left(\frac{1}{6}\right)} & 0 & \sqrt{\left(\frac{1}{4}\right)}\\
0 & 0 & \sqrt{\left(\frac{1}{4}\right)} & \frac{(-1)^z}{4} 
\end{bmatrix}$  \\

Using Theorem \ref{thm:main}, the eigenvalues of Randić matrix of $T^2_{4,3,4}$ with its multiplicity are given in the following table.
$$
\begin{array}{|l|l|l|}
\hline
{\rm Matrix} & {\rm Eigenvalues} & {\rm Multiplicity} \\ \hline\hline
P_1 & 0 & \left(|L_3|-|L_2|\right) = 36-12 = 24 \\ [0.3cm]
P_2 & -\frac{\sqrt{3}}{2}, \frac{\sqrt{3}}{2} & \left(|L_2|-|L_1|\right) = 12-6 = 6 \\ [0.3cm]
P_3 & -\frac{\sqrt{33}}{6}, 0,  \frac{\sqrt{33}}{6} & (|L_1|-|L_0|) = 6-2 = 4 \\ [0.3cm]
P^1_4 & -1, -0.5672, 0.3373, 0.9799 & 1 \\ [0.3cm]
P^2_4 & -0.9799, -0.3373, 0.5672, 1 &  1 \\
\hline
\end{array}
$$

The star $K_{1,n-1}$ is a level-wise regular trees $T^1_{n-1,1}$. The Randić spectra of $K_{1,n-1}$ is given in \cite{Alikhani2015} as follows:
\begin{Theorem}\cite{Alikhani2015} Let $K_{1,n-1}$ be the star on $n$ vertices. Then
$S_{R}(K_{1,n-1}) = \bigg(\begin{matrix}
-1 & 0 & 1  \\
1 & n-2 & 1 
\end{matrix}\bigg)$.
\end{Theorem}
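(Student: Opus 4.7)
The plan is to recognize $K_{1,n-1}$ as the special case of the level-wise regular tree $T^1_{n-1,1}$ of height $h=1$, in which the center has degree $m_0 = n-1$ and the $n-1$ leaves have degree $m_1 = 1$, and then to invoke Theorem \ref{thm:main}. From this identification the layer sizes read $|L_0| = 1$ and $|L_1| = n-1$, so the index set of Theorem \ref{thm:main} simplifies to $\psi = \{1\}$ (since $|L_1| > |L_0|$), and consequently
\[
\sigma(\R(K_{1,n-1})) \;=\; \sigma(P_1) \cup \sigma(P_{h+1}) \;=\; \sigma(P_1) \cup \sigma(P_2).
\]

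Next, I would evaluate the two tridiagonal submatrices explicitly. The leading principal submatrix $P_1$ is simply the $1\times 1$ zero matrix $[0]$, which by Theorem \ref{thm:main}(b) contributes the eigenvalue $0$ with multiplicity at least $|L_1|-|L_0| = n-2$. For $P_2$, substituting $m_1 = 1$ into the bottom-right $2\times 2$ block of $P_{h+1}$ (noting that the off-diagonal entry becomes $\sqrt{1/m_1} = 1$) yields
\[
P_2 \;=\; \begin{bmatrix} 0 & 1 \\ 1 & 0 \end{bmatrix},
\]
whose characteristic polynomial $\l^2 - 1$ produces the simple eigenvalues $+1$ and $-1$, each contributing multiplicity $1$.

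To finish, I would verify the count: $(n-2) + 1 + 1 = n$ eigenvalues, matching the order of $\R(K_{1,n-1})$, so these exhaust the spectrum. Assembling the multiplicities gives exactly the table claimed for $S_R(K_{1,n-1})$. No serious obstacle is anticipated here; the only point requiring care is checking that the boundary formulas defining $P_{h+1}$ specialize correctly when $h=1$, which is immediate from Lemma \ref{lm:2}.
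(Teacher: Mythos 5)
Your proposal is correct and follows essentially the same route as the paper: identify $K_{1,n-1}$ with the level-wise regular tree $T^1_{n-1,1}$ of height $h=1$, apply Theorem \ref{thm:main} to get $\sigma(P_1)=\{0\}$ with multiplicity $|L_1|-|L_0|=n-2$ and $\sigma(P_2)=\{-1,1\}$ each simple, and confirm the total count is $n$. The only addition beyond the paper's outline is your explicit verification that the multiplicities sum to $n$, which is a worthwhile sanity check but not a different argument.
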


The above result can also be proved using our main result Theorem \ref{thm:main}. The outline of the proof is as follows: For a star graph $K_{1,n-1}$ on $n$ vertices, note that $h=1$.

Hence, we have $P_{2} = 
\begin{bmatrix}
  0 & 1 \\
  1 & 0
\end{bmatrix}.$

Using Theorem \ref{thm:main}, the Randić spectra of $K_{1,n-1}$ is given as \\
$$\begin{array}{|l|l|l|}
\hline
{\rm Matrix} & {\rm Eigenvalues} & {\rm Multiplicity} \\ \hline\hline
P_1 & 0 & (|L_1|-|L_0|) = (n-1)-1 = n-2 \\[0.3cm]
P_2 & -1, 1 & 1 \\ \hline
\end{array}
$$
\smallskip

\section*{Concluding Remarks}

The results presented in this paper reduce the computation of the eigenvalues of the Randić matrix of level-wise regular trees of size equal to the order (given in \eqref{eq:order}) of level-wise regular trees to specific tridiagonal matrices (described in Theorem \ref{thm:main}) of size equal to the height of level-wise regular trees plus one. For examples, the level-wise regular trees given in Examples \ref{EX:1} and \ref{EX:2} are of order 41 and 56, respectively (and hence their Randić matrices of size $41 \times 41$  and $56 \times 56$, respectively), but the eigenvalues for both trees can be calculated using matrices of size $4 \times 4$ (refer Examples  \ref{Ex:3} and \ref{Ex:4}). Hence, the Randić index (which is the sum of the squares of the eigenvalues of Randić matrix) and the Randić energy (which is the sum of the absolute values of the eigenvalues of Randić matrix) can be easily calculated for level-wise regular trees.


\begin{thebibliography}{00}
\bibitem{Alikhani2015} S. Alikhani and N. Ghambari, Randić energy of specific graphs, Applied Math. and Compu., 269 (2015), 722--730.

\bibitem{Andrade2017} E. Andrade, H. Gomes, M. Robbiano, Spectra and Randić spectra of caterpillar graphs and applications to the energy, MATCH Commun. Math. Comput. Chem., 77 (2017), 61--75.

\bibitem{Bozkurt2010} S. B. Bozkurt, A. D. G\"{u}ng\"{o}r, I. Gutman, A. S. Cevik, Randić Matrix and Randić Energy, MATCH Commun. Math. Comput. Chem., 64 (2010), 239--250.

\bibitem{Cvetkovic1980} D. M. Cvetković, M. Doob and H. Sachs, Spectra of graphs - theory and application, Academic Press, New York, 1980.

\bibitem{Dehmer2015} M. Dehmer, M. Moosbrugger, Y. Shi, Encoding structural information uniquely with polynomial-based descriptors by employing the Randić matrix, Applied Math. and Compu., 268 (2015), 164--168. 

\bibitem{Fernandes2008} R. Fernandes, H. Gomes, E. A. Martins, On the spectra of some graphs like weighted rooted trees, Linear Algebra and Appl., 428 (2008), 2654--2674.

\bibitem{Furtula2013} B. Furtula, I. Gutman, Comparing energy and Randić energy, Maced. J. Chem. and Chem. Engin., 32 (2013), 117--123.

\bibitem{Gao2021} Y. Gao, W. Gao, Y. Shao, The minimal Randić energy of trees with given diameter, Applied Math. and Compu., 411 (2021), 126489.

\bibitem{Gutman2008} I. Gutman, B. Furtula (Eds.), Recent results in the theory of Randić index, Univ. Kragujevac, Kragujevac, 2008.

\bibitem{Gutman2014} I. Gutman, B. Furtula and S. B. Bozkurt, On Randić energy, Linear Algebra and Appl., 442 (2014), 50--57.


\bibitem{Li2006} X. Li, I. Gutman, Mathematical aspects of Randić-type molecular structure descriptors, Univ. Kragujevac, Kragujevac, 2006.

\bibitem{Li2008} X. Li, Y. Shi, A survey on the Randić index, MATCH Commun. Math. Comput. Chem., 59 (2008) 127–-156.

\bibitem{Liu2012} B. Liu, Y. Huang, J. Feng, A note on the Randić spectral radius, MATCH Commun. Math. Comput. Chem., 68 (2012), 913--916.

\bibitem{Randi'c 1975} M. Randić, On characterization of molecular branching, J. Amer. Chem. Soc. 97 (1975) 6609-–6615. 

\bibitem{Randi'c 2008} M. Randić, On history of the Randić index and emerging hostility toward chemical graph theory, MATCH Commun. Math. Comput. Chem. 59 (2008) 5–-124.

\bibitem{Rather2023} B. Rather, S. Pirzada, M. Bhat, T. Chishti, On Randić spectrum of zero divisor graphs of commutative ring $\mathbb{Z}_n$, Communications in Combi. and Opt., 8(1) (2023), 103--113.

\bibitem{Rodriguez2005A} J. A. Rodríguez, A spectral approach to the Randić index, Linear Algebra Appl., 400 (2005), 339--344.

\bibitem{Rodriguez2005B} J. A. Rodríguez, J. M. Sigarreta, On the Randić index and conditional parameters of a graph, MATCH Commun. Math. Comput. Chem., 54 (2005), 403--416.

\bibitem{Rojo2002} O. Rojo, The spectrum of the Laplacian matrix of a balanced binary tree, Linear Algebra Appl., 349 (2002), 203--219.


\bibitem{Rojo2005} O. Rojo and R. Soto, The spectra of the adjacency matrix and Laplacian matric for some balanced trees, Linear Algebra Appl., 403 (2005), 97--117. 

\bibitem{Trefethen1997} L. N. Trefethen, D. Bau III, Numerical linear algebra, Society for Industrial and Applied Mathematics, 1997.  

\bibitem{Yin2021} D. Yin, On the multiple eigenvalue of Randić matrix of trees, Linear and Multilinear Algebra, 69 (2021), 1137--1150.  
\end{thebibliography}
\end{document}